\documentclass[11pt,a4paper,reqno]{article}
\usepackage[T1]{fontenc}
\usepackage[utf8]{inputenc}
\usepackage[english]{babel}
\usepackage{lmodern,microtype,amsmath,amssymb,amsthm,mathtools}
\usepackage[margin=27mm,headheight=14pt]{geometry}
\usepackage{enumitem,xcolor,fancyhdr}
\usepackage[colorlinks=true,linkcolor=blue!40!black,citecolor=blue!40!black,urlcolor=blue!40!black]{hyperref}
\usepackage{bookmark}
\setlist{itemsep=4pt,topsep=5pt}

\newtheorem{theorem}{Theorem}[section]
\newtheorem{lemma}[theorem]{Lemma}
\newtheorem{proposition}[theorem]{Proposition}
\newtheorem{corollary}[theorem]{Corollary}
\theoremstyle{definition}
\newtheorem{definition}[theorem]{Definition}

\theoremstyle{remark}
\newtheorem{remark}[theorem]{Remark}
\numberwithin{equation}{section}
\newcommand{\B}{\mathcal B(X)}
\newcommand{\F}{\mathcal F(X)}
\newcommand{\Q}{\mathcal Q(X)}
\newcommand{\Nil}{\mathcal N(X)}
\newcommand{\C}{\mathbb C}
\newcommand{\PP}{\mathbb P}
\newcommand{\Ione}{\mathcal I_1(X)}
\newcommand{\Rone}{\mathcal R_1(X)}
\DeclareMathOperator{\tr}{tr}
\DeclareMathOperator{\rank}{rank}
\DeclareMathOperator{\spanop}{span}
\DeclareMathOperator{\AlgRef}{Ref}
\newcommand{\spn}[1]{\spanop\{#1\}}
\newcommand{\step}[1]
{\par\smallskip\noindent\textit{#1}\quad}
\title{Surjective maps preserving quasi-nilpotence\\of  products of operators}
\author{Zakariae Aznay \qquad Fadil Chabbabi\\[1mm]
\small Laboratory (A.F.N.L.A), Department of Mathematics, Faculty of Sciences,\\
\small Abdelmalek Essa\^adi University, Tetouan, Morocco\\[1mm]
\small \texttt{aznayzakariae@gmail.com}\qquad \texttt{f.chabbabi@uae.ac.ma}}
\date{}
\begin{document}
\maketitle
\begin{abstract}
Let $X$ be an infinite-dimensional complex Banach space and let $\mathcal B(X)$ the full algebra of all bounded linear operator on $X$. In this paper, we  give a complete characterization of  surjective maps $\phi:\mathcal B(X)\to\mathcal B(X)$ satisfying
\[
r(AB)=0\quad\Longleftrightarrow\quad r(\phi(A)\phi(B))=0
\qquad(A,B\in\mathcal B(X)).
\]
Without assuming additivity, injectivity, or continuity, we show that there exists a function $\gamma : B(X)\mapsto \C \setminus\{0\}$ such that the map $\phi$  takes one of the following forms: 
\begin{enumerate}
    \item there exists a bounded linear or conjugate-linear, invertible operator  $S:X\to X$ such that :
\begin{equation*}\label{eq:direct}
\phi(A)=\gamma(A)SAS^{-1},\qquad \text{ for all } \quad A\in B(X),
\end{equation*}
\item the space $X$ is reflexive and there exists a bounded linear or conjugate-linear, invertible operator   $S:X^*\to X$ such that
\begin{equation*}\label{eq:dual}
\phi(A)=\gamma(A)SA^*S^{-1},\qquad \text{ for all } \quad A\in B(X).
\end{equation*}
\end{enumerate}
\end{abstract}
\noindent\textbf{Keywords:} quasi-nilpotent operator; nonlinear preservers; spectral radius.
\par\noindent\textbf{2020 Mathematics Subject Classification.}
Primary 47B49; Secondary 47A10, 47L05.

\section{Introduction and main result}
Throughout this paper, $X$ is an infinite-dimensional complex Banach space, $\B$ is the algebra of all bounded linear operators on $X$, 
and $X^*$ is  the dual of $X$, that is, the vector space of all continuous linear functionals on $X$. For $A\in\B$, its Banach adjoint $A^*:X^*\to X^*$ is defined by $A^*f=f\circ A$ for all $f\in X^*$. 
  
An operator $A$ is said to be \emph{quasi-nilpotent} when its spectral radius $r(A)$ is zero. We denote the sets of all quasi-nilpotent and nilpotent operators by $\Q$ and $\Nil$, respectively.

In \cite[Theorem 2.1]{LSS}, Li, \v{S}emrl, and Sze characterized surjective maps \(\phi:{\B}\to{\B}\) preserving nilpotence of products in both directions:
\begin{equation}\label{eq:SS}
AB\in \Nil\quad\Longleftrightarrow\quad \phi(A)\phi(B)\in \Nil
\qquad (A,B\in\mathcal{B}(X)).
\tag{1}
\end{equation}
In the infinite-dimensional case, they proved that  there exist a function
\[
\gamma:\B\longrightarrow\C\setminus\{0\}
\]
for which one of the following two cases holds:
\begin{enumerate}[label=\textup{(\alph*)}]
\item There exists a  bounded and invertible, linear or conjugate-linear $S:X\to X$  such that 
\begin{equation}\label{eq1}
\phi(A)=\gamma(A)SAS^{-1} \qquad \text{ for all } \quad A\in B(X).
\end{equation}
\item The space $X$ is reflexive, and there exists  a bounded and invertible, linear or conjugate-linear $S:X^*\to X$  such that 
\begin{equation}\label{eq2}
\phi(A)=\gamma(A)SA^*S^{-1}\qquad \text{ for all } \quad A\in B(X).
\end{equation}
\end{enumerate}
In the finite-dimensional case, when $ dim(X)\ge 3$ they showed that $\phi$ has an analogous form, up to a complex homomorphism.

In this paper, we study surjective maps $\phi:\B \to \B$ that preserve the quasi-nilpotence of the products of operators  in the following sense: 
\begin{equation}\label{eq:P}
AB\in \Q\quad\Longleftrightarrow\quad
\phi(A)\phi(B)\in \Q
\qquad(A,B\in\B).
\tag{2}
\end{equation}
We do not assume that \(\phi\) is additive or continuous. In Theorem~\ref{thm:main}, we prove that every such map has the same form as above. Consequently, the problems of characterizing surjective maps preserving nilpotency and quasinilpotency of operator products, as stated in \eqref{eq:SS} and \eqref{eq:P}, respectively, are equivalent.

The main results are stated in the following theorem.

\begin{theorem}\label{thm:main}
Let $\phi:\B\to\B$ be a surjective  map. The following statements are equivalent.
\begin{enumerate}[label=\textup{(\roman*)}]
\item The map $\phi$  satisfies  the condition \eqref{eq:P}.
\item There exist a function
\[
\gamma:\B\longrightarrow\C\setminus\{0\}
\]
for which one of the following two cases holds:
\begin{enumerate}[label=\textup{(\alph*)}]
\item there exists a  bounded and invertible, linear or conjugate-linear $S:X\to X$  such that 
\begin{equation}\label{eq:direct}
\phi(A)=\gamma(A)SAS^{-1} \qquad \text{ for all } \quad A\in B(X).
\end{equation}
\item The space $X$ is reflexive, and there exists  a bounded and invertible, linear or conjugate-linear $S:X^*\to X$  such that 
\begin{equation}\label{eq:dual}
\phi(A)=\gamma(A)SA^*S^{-1}\qquad \text{ for all } \quad A\in B(X).
\end{equation}
\end{enumerate}
Moreover, the scalar function satisfies
\begin{equation}\label{eq:radial}
\{\gamma(tA)\sigma(t):t\in\C^\times\}=\C^\times
\qquad(A\in\B\setminus\{0\}), 
\end{equation}
where the mapping $t\to \sigma(t)\in \C$ is either the identity or the complex conjugation.  
\end{enumerate}
\end{theorem}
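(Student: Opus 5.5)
Sufficiency ((ii)$\Rightarrow$(i)) is a direct verification. In case (a), $\phi(A)\phi(B)=\gamma(A)\gamma(B)S(AB)S^{-1}$. Similarity by a bounded invertible linear or conjugate-linear $S$ preserves the spectral radius; in the conjugate-linear case the spectrum is conjugated, but $r$ is unchanged. Nonzero scalar multiples also preserve quasi-nilpotence. In case (b), $\phi(A)\phi(B)=\gamma(A)\gamma(B)S(BA)^*S^{-1}$, and $r((BA)^*)=r(BA)=r(AB)$. The radial condition \eqref{eq:radial} is exactly what surjectivity of $\phi$ forces on $\gamma$, so it is not needed for preservation. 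Conversely, it will fall out of the form once (a) or (b) is established: the image of the line $\C^\times A$ must be the whole punctured line $\C^\times SAS^{-1}$ (respectively $\C^\times SA^*S^{-1}$).

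For necessity, my plan is to reduce to the known nilpotent result \cite[Theorem 2.1]{LSS} by proving that $\phi$ satisfies \eqref{eq:SS}. The work goes through rank-one operators.

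\textbf{Step 1.} The set $\{B : r(AB)=0 \text{ for all } B\}$ is $\{0\}$, so $\phi(A)=0$ iff $A=0$.

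\textbf{Step 2.} For $A\in\B$ let $A^{\perp}=\{B: r(AB)=0\}$. Condition \eqref{eq:P} gives $\phi(A^\perp)=\phi(A)^\perp$. Using surjectivity, $A^\perp\subseteq C^\perp$ iff $\phi(A)^\perp\subseteq\phi(C)^\perp$.

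\textbf{Step 3.} Characterize rank-one operators through these sets, for instance as the nonzero $A$ whose $A^\perp$ is maximal under inclusion. For $A=x\otimes f$ we have $r(AB)=|f(Bx)|$, so $A^\perp=\{B: f(Bx)=0\}$ is a hyperplane-like set. For $A$ of rank at least $2$, I would show that $A^\perp$ is strictly contained in $(x\otimes f)^\perp$ for no rank-one operator, or otherwise fails maximality. Rank-one operators are exactly the operators whose $A^\perp$ is a linear subspace of codimension one, and this property is expressible through inclusions. Concretely, I would prove that $A^\perp\subseteq C^\perp$ with $A$ of rank one forces $C$ to be a multiple of $A$. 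This shows $\phi$ maps rank-one operators onto rank-one operators and preserves the relation ``$f(Bx)=0$'' between rank-one pairs: $r(x\otimes f\cdot y\otimes g)=|f(y)g(x)|$.

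\textbf{Step 4.} From the rank-one preservation, derive the standard structure on rank-one operators. The relation $f(y)g(x)=0$ between rank-one pairs is the same for nilpotent and quasi-nilpotent products, since a rank-one product is nilpotent iff it is quasi-nilpotent. With the orthogonality-type relation $x\otimes f \sim y\otimes g$, a fundamental-theorem-of-projective-geometry argument, as in \cite{LSS}, gives $\phi(x\otimes f)=\lambda\, Sx\otimes Tf$ (or the dual form) with $S,T$ semilinear. Boundedness and the compatibility $T=(S^{-1})^*$ come from preservation of $f(x)=0$.

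\textbf{Step 5.} Extend to all of $\B$. For arbitrary $A$, $\phi(A)^\perp\cap\{\text{rank one}\}$ determines $\phi(A)$ up to a scalar: $\phi(A)$ annihilates, in the sense $g(\phi(A)y)=0$, exactly the transformed pairs. This gives $\phi(A)=\gamma(A)SAS^{-1}$ or the dual form.

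The main obstacle is Step 3, the intrinsic characterization of rank-one operators via the sets $A^\perp$, because $A^\perp$ for general $A$ is not a linear set and quasi-nilpotence is harder to control than nilpotence. My first attempt would be to show that for $\operatorname{rank}A\ge2$ there are two rank-one operators $R_1,R_2$ with $R_i\notin A^\perp$ but with a common refinement, and to use this to separate the ranks.
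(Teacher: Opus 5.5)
Your sufficiency argument and your derivation of \eqref{eq:radial} from surjectivity are fine. Steps 4--5 would also match the paper's route once rank-one preservation is available: adjacency via Lemma~\ref{known:adjacency}, orthogonality of rank-one idempotents, Theorem~\ref{known:semrl}, and extension via Lemma~\ref{known:determination}.

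Step 3, however, which you rightly call the crux, has a genuine gap: the inclusion order on the sets $A^\perp$ carries no information. For \emph{every} nonzero $A$, not only rank-one ones, $A^\perp\subseteq C^\perp$ with $C\neq0$ forces $[C]=[A]$. Testing with $T=y\otimes g$ gives $g(Ay)=0\Rightarrow g(Cy)=0$, hence $Cy\in\spn{Ay}$ for all $y$. This strong local dependence forces $C\in\spn{A}$ by a routine argument: compare coefficients at $y_1,y_2,y_1+y_2$, and note that $\ker A\subseteq\ker C$ handles the rank-one case.

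Consequences:
\begin{enumerate}
\item The family $\{A^\perp:A\neq0\}$ is an antichain, so every $A^\perp$ is ``maximal'' and this criterion cannot single out rank one.
\item The implication you propose to prove for rank-one $A$ holds verbatim for all $A$. It shows that $[A]\mapsto[\phi(A)]$ is well defined and injective (essentially Lemma~\ref{prop:Phi}), but not that $\phi(\Rone)=\Rone$.
\item Your other criterion (rank one iff $A^\perp$ is a linear hyperplane) is true but is not transported by $\phi$. Since $\phi$ is not additive, $\phi(A^\perp)=\phi(A)^\perp$ says nothing about linearity, and by the point above this property is not expressible through inclusions either.
\item The closing idea about a ``common refinement'' is too vague to evaluate.
\end{enumerate}

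The missing idea is a characterization of rank one stated purely through the relation $r(CT)=0$ between projective points $[C]$ and test operators $T$, since that relation is what $\phi$ transports (using surjectivity). The paper uses an incidence condition: a family $\mathcal L$ of at least three points such that, for every $T$, the points $[C]\in\mathcal L$ with $r(CT)=0$ form a singleton or all of $\mathcal L$. Theorem~\ref{thm:incidence} shows these families are exactly $\PP(\mathcal V)$ for two-dimensional spaces $\mathcal V$ all of whose nonzero elements have rank one. That proof needs two nontrivial inputs with no counterpart in your plan:
\begin{enumerate}
\item Lemma~\ref{lem:triple}: three operators with equal pairwise local spans are linearly dependent, proved via the classification of non-reflexive two-dimensional reduced spaces.
\item Lemma~\ref{lem:plane}: a finite-rank $T$ with $r(CT)>0$ for every nonzero $C$ in a plane containing an operator of rank at least two.
\end{enumerate}

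Finally, your announced plan of verifying \eqref{eq:SS} and quoting \cite{LSS} is not what Steps 1--5 actually do. It also could not be carried out before the form of $\phi$ is known, because for products of rank at least two quasi-nilpotence and nilpotence differ.
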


In finite dimensions case, nilpotence and quasi-nilpotence are the same. Hence condition (1) is exactly the condition of preserving nilpotence of products of operators studied by Šemrl and coauthors.

The paper is organized as follows. Section~\ref{sec:known} recalls some basic properties of rank-one operators that will be used throughout the paper. Section~\ref{sec:lemmas} presents several useful results concerning the trace functional on the space of finite-rank operators. Section~\ref{sec:reflexive} recalls the notion of the algebraic reflexive hull and reviews a result characterizing the two-dimensional reduced linear subspaces of \(\mathcal{B}(U,V)\) that are not algebraically reflexive, where \(U\) and \(V\) are finite-dimensional vector spaces and \(\mathcal{B}(U,V)\) denotes the vector space of linear operator denotes the vector space of linear operators from $U$ to  $V$. This result is then used to establish a linear dependence criterion for three operators that are pointwise contained in the same subspace. Section~\ref{sec:incidence} introduces the incidence condition for subsets of projective spaces, which will be used to characterize rank-one operators. Finally, Section~\ref{sec:projective} contains the proof of Theorem~\ref{thm:main}, based on the preceding results.
\section{Notation and Preliminary Results on Rank-One Operators}\label{sec:known}
In the sequel, we write $\F$ for the vector space of finite-rank operators, $\Rone$ for the set of operators of rank one, and $\Ione$ for the set of rank-one idempotents. For $x\in X$ and $f\in X^*$, the operator $x\otimes f$ is given by
\[
(x\otimes f)y=f(y)x\qquad(y\in X).
\]
It has rank one when $x$ and $f$ are nonzero, moreover every rank one operator has this form. 

For an operator $R$ of rank at most one,  we have 
\begin{equation}\label{eq:rankone}
R^2=(\tr R)R,\qquad r(R)=|\tr R|,
\end{equation}
where $\tr R$ and $r(R)$  denote the standard trace and the spectral radius of $R$, respectively. In particular,
\begin{equation}\label{eq:rankonetests}
r(A(x\otimes f))=|f(Ax)|,
\qquad
r((x\otimes f)A)=|f(Ax)|.
\end{equation}
Consequently, for a rank-at-most-one operator, being quasinilpotent, being nilpotent, and having trace zero are equivalent.

For $R_1,R_2\in\mathcal R_1$, we write $R_1\sim R_2$ if they have
the same range or the same kernel. In particular, for rank-one
operators $x\otimes f$ and $y\otimes g$, we have
\(
x\otimes f\sim y\otimes g
\)
if and only if $x$ and $y$ are linearly dependent or $f$ and $g$
are linearly dependent. Moreover, if $R_1\sim R_2$, then $\lambda R_1\sim\beta R_2$
for all nonzero scalars $\lambda,\beta\in\mathbb C$.

The following three lemmas are taken from Šemrl's paper 
\cite[Lemmas 2.11--2.13]{LSS}. The first one provides a
necessary and sufficient condition for two rank-one operators
to be equivalent. The second characterizes the orthogonality
of projections in terms of this equivalence relation. The third lemma establishes a necessary and sufficient
condition for two nonzero operators to be linearly dependent.

\begin{lemma}\cite{LSS}\label{known:adjacency}
Let $R_1,R_2$ be linearly independent rank-one operators. Then $R_1\sim R_2$ if and only if there is a rank-one operator $R_3$, such that $R_3$ is linearly independent of $R_1$, $R_3$ is linearly
independent of $R_2$, and  for every $T\in\B$,
\[
R_1T,R_2T\in\Nil\quad\Longrightarrow\quad R_3T\in\Nil.
\]
\end{lemma}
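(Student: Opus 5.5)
We prove Lemma~\ref{known:adjacency} directly, using \eqref{eq:rankonetests}: for a rank-one $R=x\otimes f$ and any $T\in\B$, $RT\in\Nil$ iff $f(Tx)=0$. Write $R_1=x_1\otimes f_1$, $R_2=x_2\otimes f_2$, and let $R_3=x_3\otimes f_3$ be a candidate. The condition then reads: for every $T\in\B$,
\[
f_1(Tx_1)=0,\ f_2(Tx_2)=0 \quad\Longrightarrow\quad f_3(Tx_3)=0 .
\]
Each map $T\mapsto f_i(Tx_i)$ is a linear functional $\varphi_i$ on $\B$. The implication says $\ker\varphi_1\cap\ker\varphi_2\subseteq\ker\varphi_3$, which by elementary linear algebra is equivalent to $\varphi_3\in\spn{\varphi_1,\varphi_2}$. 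So the lemma reduces to deciding when some $\varphi_3=\alpha\varphi_1+\beta\varphi_2$ comes from a rank-one tensor $x_3\otimes f_3$ with $R_3$ independent of both $R_1$ and $R_2$.

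\emph{Sufficiency.} Suppose $R_1\sim R_2$, say with $x_1=x_2=x$ (the same range; the same-kernel case is symmetric). Independence forces $f_1,f_2$ to be independent. Take $R_3=x\otimes(f_1+f_2)$. Then $\varphi_3=\varphi_1+\varphi_2$, so the implication holds, and $R_3$ is clearly independent of $R_1$ and of $R_2$.

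\emph{Necessity.} Suppose $R_1\not\sim R_2$, so that $x_1,x_2$ are independent and $f_1,f_2$ are independent. Note that $\varphi_i(T)=\tr(T\,(x_i\otimes f_i))$. Testing on finite-rank $T$, which separate $\F$ under the trace pairing, the identity $\varphi_3=\alpha\varphi_1+\beta\varphi_2$ becomes
\[
x_3\otimes f_3=\alpha\, x_1\otimes f_1+\beta\, x_2\otimes f_2 .
\]
To justify this, evaluate on all $T=y\otimes g$: this gives $g(x_3)f_3(y)=\alpha g(x_1)f_1(y)+\beta g(x_2)f_2(y)$ for all $y\in X$ and $g\in X^*$.

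Since $x_1,x_2$ are independent and $f_1,f_2$ are independent, the right-hand side has rank two whenever $\alpha\beta\neq0$. So one of $\alpha,\beta$ vanishes, and then $R_3$ is a multiple of $R_1$ or of $R_2$, contradicting the choice of $R_3$. (If $\alpha=\beta=0$, then $R_3=0$, which is not rank one.)

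The only step needing care is the linear-algebra fact that $\ker\varphi_1\cap\ker\varphi_2\subseteq\ker\varphi_3$ forces $\varphi_3\in\spn{\varphi_1,\varphi_2}$. This is standard for finitely many linear functionals on any vector space and needs no continuity. Apart from that, the argument is routine.
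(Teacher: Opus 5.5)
Your proof is correct. Note that the paper gives no argument of its own for this lemma: it quotes it from \cite{LSS} and uses Remark~\ref{rem:nil-quasi-nil} to pass from nilpotence to quasi-nilpotence. So your write-up is a self-contained replacement for a citation, not a variant of an in-paper proof.

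Your key observation is that $T\mapsto f_i(Tx_i)=\tr(R_iT)$ is linear in $T$. The hypothesis $R_1T,R_2T\in\Nil\Rightarrow R_3T\in\Nil$ is therefore the kernel inclusion $\ker\varphi_1\cap\ker\varphi_2\subseteq\ker\varphi_3$, which gives $\varphi_3\in\spn{\varphi_1,\varphi_2}$. This turns the lemma into pure linear algebra.

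Your evaluation on $T=y\otimes g$ is exactly the injectivity of $A\mapsto\ell_A$ proved in Lemma~\ref{lem:tracepairing}. You could simply cite that lemma. Otherwise, state explicitly that Hahn--Banach converts the scalar identity, valid for all $g\in X^*$, into the operator identity $R_3=\alpha R_1+\beta R_2$.

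Compared with the bare citation, your route gives a sharper statement. The operators $R_3$ satisfying the implication are precisely the rank-one elements of $\spn{R_1,R_2}$. So the lemma is equivalent to saying that this pencil contains a rank-one point other than $[R_1]$ and $[R_2]$ iff $R_1\sim R_2$. This is the same trace-functional mechanism the paper uses in the proof of Theorem~\ref{thm:incidence}.

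Since you work from \eqref{eq:rankonetests}, $r(RT)=|f(Tx)|$, your argument also works verbatim with $\Q$ in place of $\Nil$. The appeal to Remark~\ref{rem:nil-quasi-nil} is then not needed for this lemma.
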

\begin{lemma}\cite{LSS}\label{known:orthogonality}
Let $P,Q\in\Ione$ be distinct. Then $PQ=QP=0$ if and only if there are rank-one nilpotents $M,N$ such that
\[
P\sim M,\quad P\sim N,\quad Q\sim M,\quad Q\sim N,
\qquad M\not\sim N.
\]
\end{lemma}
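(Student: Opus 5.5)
We argue directly with the tensor representations. Write $P=x\otimes f$ and $Q=y\otimes g$ with $f(x)=1$ and $g(y)=1$, since both are rank-one idempotents. Then
\[
PQ=(x\otimes f)(y\otimes g)=f(y)\,x\otimes g,\qquad QP=g(x)\,y\otimes f .
\]
Hence $PQ=0$ if and only if $f(y)=0$, and $QP=0$ if and only if $g(x)=0$. Throughout we use that a rank-one operator $u\otimes h$ is nilpotent if and only if $h(u)=0$, by \eqref{eq:rankone}.

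\emph{($\Rightarrow$)} Assume $f(y)=g(x)=0$. Put $M=x\otimes g$ and $N=y\otimes f$; both are nilpotent because $g(x)=0$ and $f(y)=0$.
\begin{enumerate}
\item $M$ has the range of $P$ and the kernel of $Q$, so $M\sim P$ and $M\sim Q$.
\item $N$ has the range of $Q$ and the kernel of $P$, so $N\sim Q$ and $N\sim P$.
\item $x$ and $y$ are independent, since $f(x)=1$ and $f(y)=0$.
\item $g$ and $f$ are independent, since $f(x)=1$ and $g(x)=0$.
\end{enumerate}
By items 3 and 4, $M$ and $N$ share neither range nor kernel, so $M\not\sim N$.

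\emph{($\Leftarrow$)} Let $M=u\otimes h$ with $h(u)=0$. Since $M\sim P$, either $u\in[x]$ or $h\in[f]$. These cannot both hold, for then $M$ would be a nonzero multiple of $P$ and $h(u)\neq0$. Thus $M$ has either the range of $P$ or the kernel of $P$, but not both. The same dichotomy holds for $N$ relative to $P$, and for $M$ and $N$ relative to $Q$.

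Suppose first that $M$ has the range of $P$, so $u\in[x]$. We apply $M\sim Q$ and split into two cases.
\begin{itemize}
\item If also $u\in[y]$, then $x$ and $y$ are dependent, so $P$ and $Q$ have the same range. Since $N\not\sim M$, the operator $N$ cannot have range $[x]=[y]$. The relations $N\sim P$ and $N\sim Q$ then force $\ker N=\ker f=\ker g$, so $f$ and $g$ are dependent. Together with $x,y$ dependent and the normalizations $f(x)=g(y)=1$, this gives $P=Q$, a contradiction.
\item Hence $h\in[g]$, so $M$ is a multiple of $x\otimes g$. Nilpotency of $M$ gives $g(x)=0$, that is, $QP=0$.
\end{itemize}

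Now consider $N$. Since $N\not\sim M$, the range of $N$ is not $[x]$ and its kernel is not $\ker g$. From $N\sim P$, the range of $N$ is not $[x]$, so $\ker N=\ker f$. From $N\sim Q$, the kernel of $N$ is not $\ker g$, so the range of $N$ is $[y]$. Therefore $N$ is a multiple of $y\otimes f$. Nilpotency of $N$ yields $f(y)=0$, that is, $PQ=0$.

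The remaining case, where $M$ has the kernel of $P$, is symmetric: exchange the roles of ranges and kernels (equivalently of vectors and functionals, $x\leftrightarrow f$, $y\leftrightarrow g$), and the same argument gives $PQ=QP=0$.

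The only delicate point is the degenerate subcase in which $P$ and $Q$ share a range or a kernel. There the distinctness of $P$ and $Q$ must be combined with $M\not\sim N$ to reach a contradiction; everything else is elementary bookkeeping of ranges and kernels.
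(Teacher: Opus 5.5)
Your proof is correct. The paper does not prove this lemma: it quotes it from \cite{LSS} and uses it as a black box, so your argument is a self-contained replacement rather than a variant of anything in the text.

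The forward direction, with $M=x\otimes g$ and $N=y\otimes f$, is exactly right. The converse is complete. Its key observation is that nilpotency ($h(u)=0$) forbids $M$ from sharing both range and kernel with $P$, and likewise for the other three pairs. The distinctness of $P$ and $Q$ enters precisely where it must, and it is genuinely needed: for $P=Q$ one can take $M=x\otimes h$ and $N=z\otimes f$ with $h(x)=f(z)=0$, $z\notin[x]$, $h\notin[f]$.

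Your ``symmetric'' case is legitimately symmetric. All the data involved ($[u]$, $[h]$, the pairing $h(u)$, and the normalizations $f(x)=g(y)=1$) are invariant under exchanging vectors and functionals, and this exchange merely swaps the conditions $f(y)=0$ and $g(x)=0$. Spelled out:
\begin{itemize}
\item If $h\in[g]$, then $\ker M=\ker f=\ker g$. Together with $M\not\sim N$ this forces $\operatorname{ran}N=[x]=[y]$, hence $P=Q$, a contradiction.
\item Otherwise $M\in[y\otimes f]$ and $N\in[x\otimes g]$, and nilpotency of $M$ and $N$ gives $PQ=QP=0$.
\end{itemize}

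What your version buys is independence from \cite{LSS} for this step; the paper's citation buys brevity.
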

\begin{lemma}\cite{LSS}\label{known:determination}
Let $A,B\in\B\setminus\{0\}$. If
\[
AP\in\Nil\quad\Longleftrightarrow\quad BP\in\Nil
\qquad(P\in\Ione),
\]
then $A$ and $B$ are linearly dependent.
\end{lemma}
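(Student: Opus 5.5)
The plan is to turn the hypothesis into a pointwise statement about the vectors $Ax$ and $Bx$, and then use a standard ``locally linearly dependent implies linearly dependent'' argument. Every $P\in\Ione$ has the form $P=x\otimes f$ with $f(x)=1$. Then $AP=(Ax)\otimes f$ has rank at most one, so by \eqref{eq:rankone} it is nilpotent if and only if $f(Ax)=0$. The hypothesis therefore reads
\[
f(Ax)=0\iff f(Bx)=0\qquad\text{for all } x\in X,\ f\in X^* \text{ with } f(x)=1 .
\]

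\emph{Step 1: kernels agree.} Let $x\neq0$ with $Ax=0$. Then $f(Bx)=0$ for every $f$ with $f(x)=1$. Suppose $Bx\neq 0$.
\begin{itemize}
\item If $x,Bx$ are independent, Hahn--Banach gives $f$ with $f(x)=1$ and $f(Bx)=1$, a contradiction.
\item If $Bx=cx$ with $c\neq0$, then every $f$ with $f(x)=1$ gives $f(Bx)=c\neq0$, again a contradiction.
\end{itemize}
Hence $\ker A=\ker B$.

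\emph{Step 2: $Ax$ and $Bx$ are dependent for every $x$.} Suppose $Ax,Bx$ are independent for some $x$. Three cases arise, each giving some $f$ with $f(x)=1$ for which exactly one of $f(Ax)$, $f(Bx)$ vanishes, contradicting the hypothesis.
\begin{itemize}
\item If $x\notin\spn{Ax,Bx}$, Hahn--Banach gives $f$ with $f(x)=1$, $f(Ax)=0$, $f(Bx)=1$.
\item If $x=aAx+bBx$ with $b\neq0$, take any $f$ with $f(x)=1$ and $f(Ax)=0$; it exists because $x$ and $Ax$ are independent. Then $f(Bx)=1/b\neq0$.
\item If $b=0$, then $x=aAx$ with $a\neq0$, so $x$ and $Bx$ are independent. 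Choose $f$ with $f(x)=1$ and $f(Bx)=0$; then $f(Ax)=1/a\neq 0$.
\end{itemize}
So for every $x$ with $Ax\neq 0$ we have $Bx=\lambda(x)Ax$, and by Step 1, $\lambda(x)\neq0$.

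\emph{Step 3: $\lambda$ is constant.} I would split according to the rank of $A$.
\begin{itemize}
\item If $\rank A=1$, write $A=u\otimes g$. Step 2 gives $B(X)\subseteq\spn{u}$, so $B=u\otimes h$. Step 1 gives $\ker g\subseteq\ker h$, hence $h=cg$ and $B=cA$.
\item If $\rank A\ge2$, take $x,y$ with $Ax,Ay$ independent. Comparing
\[
\lambda(x+y)(Ax+Ay)=\lambda(x)Ax+\lambda(y)Ay
\]
gives $\lambda(x)=\lambda(y)$. If instead $Ax,Ay$ are dependent and both nonzero, I would pick $z$ with $Az\notin\spn{Ax}$, which is possible since $\rank A\ge2$. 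The previous case then gives $\lambda(x)=\lambda(z)=\lambda(y)$.
\end{itemize}
Thus $B=\lambda A$ on $X\setminus\ker A$, and both operators vanish on $\ker A$, so $B=\lambda A$.

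The only delicate point is the case analysis in Step 2, namely making sure a separating functional exists when $x$ lies in $\spn{Ax,Bx}$. The rest is routine.
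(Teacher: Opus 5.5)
The paper gives no proof of this lemma. It imports it verbatim from \cite[Lemma 2.13]{LSS} and then uses it, via Remark~\ref{rem:nil-quasi-nil}, with quasi-nilpotence in place of nilpotence, both in Lemma~\ref{lem:profiles} and in the last step of the proof of Theorem~\ref{thm:main}.

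Your argument is a correct, self-contained proof. Each step checks out:
\begin{enumerate}
\item The reduction to ``$f(Ax)=0\iff f(Bx)=0$ whenever $f(x)=1$'' is exact, because $AP=(Ax)\otimes f$ has rank at most one and trace $f(Ax)$.
\item Step~1 correctly uses the symmetry of the hypothesis to get $\ker A=\ker B$.
\item The three Hahn--Banach cases in Step~2 exhaust the possible positions of $x$ relative to $\spn{Ax,Bx}$.
\item Step~3 is the standard argument that $Bx\in\spn{Ax}$ for all $x$ forces $B\in\spn{A}$.
\end{enumerate}

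Two justifications are left implicit, though both are immediate. First, in the second case of Step~2, $x$ and $Ax$ are independent, since $x=cAx$ would give $bBx=(c-a)Ax$ and contradict the independence of $Ax$ and $Bx$. Second, in the rank-one case of Step~3, the inclusion $B(X)\subseteq\spn{u}$ on $\ker A$ comes from Step~1, not Step~2.

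Compared with the citation, your route costs a page but makes the paper independent of \cite{LSS} at this point. It also works directly with the scalar condition $f(Ax)=0$, so it proves the nilpotent and quasi-nilpotent versions at once. That makes the appeal to Remark~\ref{rem:nil-quasi-nil} unnecessary wherever this lemma is used.
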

\begin{remark}\label{rem:nil-quasi-nil}
Since every tested product in Lemmas~\ref{known:adjacency} and~\ref{known:determination} has rank at most one, the statements remain identical when the nilpotence conditions are written as quasi-nilpotence conditions. This observation may be used automatically below.
\end{remark}
Finally, we quote \cite[Theorem 2.4]{Semrl}, which provides a characterization of the bijective maps \(\xi:\Ione\to\Ione\) preserving orthogonality in both directions.
\begin{theorem}\cite{Semrl}\label{known:semrl}
Let $X$ be an infinite-dimensional complex Banach space. If $\xi:\Ione\to\Ione$ is a bijection satisfying
\[
PQ=QP=0\quad\Longleftrightarrow\quad
\xi(P)\xi(Q)=\xi(Q)\xi(P)=0,
\]
then either
\[
\xi(P)=SPS^{-1}\qquad(P\in\Ione)
\]
for a bounded invertible linear or conjugate-linear $S:X\to X$, or
\[
\xi(P)=SP'S^{-1}\qquad(P\in\Ione)
\]
for a bounded invertible linear or conjugate-linear $S:X^*\to X$. In the second case $X$ is reflexive.
\end{theorem}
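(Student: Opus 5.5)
The plan is to reduce the problem to the fundamental theorem of projective geometry applied to the pair of projective spaces $\PP(X)$ and $\PP(X^*)$. Every $P\in\Ione$ can be written as $P=x\otimes f$ with $f(x)=1$, and the pair $([x],[f])$ of projective points determines $P$ uniquely. For $P=x\otimes f$ and $Q=y\otimes g$ in $\Ione$, orthogonality $PQ=QP=0$ means exactly $f(y)=0=g(x)$. So $\xi$ is a bijection on pairs of incident-free points (a point of $\PP(X)$ and a hyperplane, i.e.\ a point of $\PP(X^*)$, not containing it). The aim is to show that it is induced by a map on points together with a map on hyperplanes.

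First, for a subset $\mathcal S\subseteq\Ione$ write $\mathcal S^\perp$ for the set of idempotents orthogonal to every member of $\mathcal S$. Since $\xi$ preserves orthogonality in both directions, it satisfies $\xi(\mathcal S^\perp)=\xi(\mathcal S)^\perp$, and hence it also commutes with $\mathcal S\mapsto\mathcal S^{\perp\perp}$. For $P=x\otimes f\neq Q=y\otimes g$ a direct computation gives
\[
\{P,Q\}^{\perp\perp}=\{z\otimes h:\ z\in\spn{x,y},\ h\in\spn{f,g},\ h(z)=1\}.
\]
This is where infinite-dimensionality of $X$ is used: it guarantees that $\{P,Q\}^{\perp}$ is large enough for the double complement to be exactly this set. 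There are three cases.
\begin{enumerate}[label=\textup{(\arabic*)}]
\item If $P$ and $Q$ have the same range, the family is $\{x\otimes h: h\in\spn{f,g},\ h(x)=1\}$, and no two of its members are orthogonal.
\item If $P$ and $Q$ have the same kernel, the family is $\{z\otimes f: z\in\spn{x,y},\ f(z)=1\}$, and again no two members are orthogonal.
\item If $P\not\sim Q$, the family is a ``$2\times2$'' set that contains orthogonal pairs. One checks this both when $\{x,y\}$ and $\{f,g\}$ are dual-like and in the degenerate subcases.
\end{enumerate}
Hence $P\sim Q$ if and only if $\{P,Q\}^{\perp\perp}$ contains no orthogonal pair, and so $\xi$ preserves $\sim$ in both directions.

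Next I separate the two kinds of adjacency. I call maximal sets of pairwise $\sim$-related idempotents \emph{cliques}. Each clique consists of idempotents with a common range or of idempotents with a common kernel, and $\xi$ maps cliques to cliques. Two range-cliques meet in at most one element and never share a kernel-type intersection structure, and a connectedness argument then shows a dichotomy. Either $\xi$ sends range-cliques to range-cliques and kernel-cliques to kernel-cliques, or it swaps the two classes globally. This yields bijections $\alpha:\PP(X)\to\PP(X)$ and $\beta:\PP(X^*)\to\PP(X^*)$ in the first case, or $\alpha:\PP(X)\to\PP(X^*)$ and $\beta:\PP(X^*)\to\PP(X)$ in the second. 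These maps satisfy $\xi(x\otimes f)=$ the idempotent determined by $(\alpha[x],\beta[f])$ (respectively with the roles of $X$ and $X^*$ exchanged), and orthogonality preservation gives $f(y)=0\iff\beta[f](\alpha[y])=0$.

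The maps $\alpha,\beta$ therefore preserve the annihilation relation between points and hyperplanes in both directions. In particular $\alpha$ maps the lines of $\PP(X)$, each being the intersection of the hyperplanes containing two points, to lines. Since $\dim X\ge3$, the fundamental theorem of projective geometry gives a semilinear bijection $S$ with automorphism $\tau$ of $\C$ inducing $\alpha$. The main obstacle is this final step: proving that $\tau$ is continuous, hence the identity or complex conjugation, and that $S$ is bounded. For this I would invoke Mackey's and Ovchinnikov's argument. Because $S$ carries every closed hyperplane $\ker f$ onto a closed hyperplane, each functional $f\circ S^{-1}$ is $\tau$-semilinear with closed kernel. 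If $\tau$ were discontinuous, one could build a functional whose kernel is dense, a contradiction; so $\tau$ is continuous. Boundedness of $S$ then follows from the closed graph theorem. In the swapped case the same argument produces a bounded semilinear bijection $X^*\to X$. Its existence forces $X^*$ to be isomorphic to $X$ in a way compatible with the duality, which yields reflexivity. Rescaling $S$ so that the idempotents match, $\xi(P)=SPS^{-1}$ in the first case and $\xi(P)=SP'S^{-1}$ in the second.
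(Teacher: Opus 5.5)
The paper gives no proof of this statement. It is quoted from \v{S}emrl \cite{Semrl} and used as a black box, so your argument has to stand on its own. Its architecture is reasonable (double orthocomplements, then adjacency, then cliques, then maps $\alpha,\beta$ on points and closed hyperplanes, then the fundamental theorem of projective geometry, then automatic continuity), and your formula for $\{P,Q\}^{\perp\perp}$ is correct. The gap is the conclusion you draw from that formula. It is not true that $P\sim Q$ if and only if $\{P,Q\}^{\perp\perp}$ contains no orthogonal pair: your case (3) fails in a degenerate subcase. Two members $w_1\otimes k_1$, $w_2\otimes k_2$ of that family are orthogonal iff $k_i(w_j)=\delta_{ij}$. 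This is possible only if the pairing $\spn{f,g}\times\spn{x,y}\to\C$ has rank two, i.e.\ only if $\det\begin{pmatrix}1&f(y)\\ g(x)&1\end{pmatrix}=1-\tr(PQ)\neq0$. When $\tr(PQ)=f(y)g(x)=1$ the pairing has rank one, and there is no orthogonal pair whether or not $P\sim Q$. For instance, take linearly independent $e_1,e_2,e_3\in X$ with biorthogonal $e_1^*,e_2^*,e_3^*\in X^*$, and let $P=e_1\otimes e_1^*$ and $Q=(e_1+e_2)\otimes(e_1^*+e_3^*)$. Then $P\not\sim Q$. Yet every $k\in\spn{e_1^*,e_3^*}$ restricts to $\spn{e_1,e_2}$ as a multiple of $e_1^*$, so $k_1(w_2)=0$ forces $e_1^*(w_2)=0$ and hence $k_2(w_2)=0\neq1$. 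What your criterion actually detects is $\tr(PQ)=1$ (equivalently, $P-Q$ nilpotent), which is strictly weaker than $P\sim Q$. The cliques and the maps $\alpha,\beta$ all depend on $\xi$ preserving $\sim$, so the proof breaks at this point.

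You can repair the step inside your framework by adding a minimality clause. For $P\neq Q$, one has $P\sim Q$ iff $\{P,Q\}^{\perp\perp}$ contains no orthogonal pair and $\{P,R\}^{\perp\perp}=\{P,Q\}^{\perp\perp}$ for every $R\in\{P,Q\}^{\perp\perp}\setminus\{P\}$. In the adjacent case, every such $R$ shares the range (resp.\ kernel) of $P$ and regenerates the same family. In the degenerate non-adjacent case, choose $0\neq k_0\in\spn{f,g}$ vanishing on $\spn{x,y}$ and put $R=x\otimes(f+k_0)$. Its double complement with $P$ is $\{x\otimes k: k\in\spn{f,g},\ k(x)=1\}$, which misses $Q$. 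The condition is phrased purely through $\perp$, so $\xi$ preserves it in both directions. (In the paper's application, $\sim$-preservation is proved separately in Lemma~\ref{prop:adjacency}, but the theorem as stated assumes only orthogonality.)

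Two later steps are also only gestured at. First, continuity of $\tau$ needs the actual argument: a discontinuous automorphism of $\C$ is unbounded near $0$, and this must be played against the relation $g(Sx)=c\,\tau(f(x))$. ``A functional with dense kernel'' is not yet a proof. Second, reflexivity does not follow from an isomorphism $X\cong X^*$ alone. You need $\alpha$ to be onto $\PP(X^*)$. For a representative $\varphi_y$ of $\alpha[y]$ one has $\ker(\varphi_y\circ S)=\{f: f(y)=0\}$. Every nonzero $\Lambda\in X^{**}$ satisfies $[\Lambda\circ S^{-1}]=\alpha[y]$ for some $y$ (after conjugating if $S$ is conjugate-linear). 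This forces $\Lambda$ to be a multiple of evaluation at $y$.
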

All four rank-one results just quoted are used under their stated hypotheses. In particular, we will establish bijectivity and preservation in both directions before applying Theorem~\ref{known:semrl}.

\section{Some Results on the Trace Functional}
\label{sec:lemmas}

The trace functional plays a crucial role in the proofs of our main
results. In this section, we establish several auxiliary results
concerning the trace that will be useful in the sequel. 

\begin{lemma}\label{lem:tracepairing}
For $A\in\B$, define $\ell_A:\F\to\C$ by $\ell_A(T)=\tr(AT)$. The map $A\mapsto\ell_A$ is linear and injective. If $A_1,\ldots,A_m$ are linearly independent, then the maps
\[
\Psi : T\longmapsto(\ell_{A_1}(T),\ldots,\ell_{A_m}(T))
\]
 $\F$ is onto $\C^m$.
\end{lemma}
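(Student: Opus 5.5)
Linearity of $A\mapsto\ell_A$ follows from bilinearity of $(A,T)\mapsto AT$ and linearity of the trace on $\F$: for $T\in\F$ we have $(\alpha A+\beta B)T=\alpha AT+\beta BT\in\F$, so $\ell_{\alpha A+\beta B}=\alpha\ell_A+\beta\ell_B$. For injectivity it suffices to test on rank-one operators. Using \eqref{eq:rankone}–\eqref{eq:rankonetests}, for $T=x\otimes f$ we get $AT=(Ax)\otimes f$, hence
\[
\ell_A(x\otimes f)=\tr\bigl((Ax)\otimes f\bigr)=f(Ax).
\]
If $\ell_A=0$, then $f(Ax)=0$ for all $x\in X$ and $f\in X^*$. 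By the Hahn--Banach theorem this forces $Ax=0$ for every $x$, so $A=0$.

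For surjectivity of $\Psi$, observe that $\Psi$ is linear from $\F$ to $\C^m$, so its image $W$ is a subspace of $\C^m$. If $W\neq\C^m$, there is a nonzero $(c_1,\dots,c_m)\in\C^m$ annihilating $W$ under the bilinear pairing $\sum_i c_iw_i$. Then for every $T\in\F$,
\[
0=\sum_{i=1}^m c_i\,\ell_{A_i}(T)=\ell_{\sum_i c_iA_i}(T),
\]
by the linearity established above. Injectivity then gives $\sum_i c_iA_i=0$, which contradicts the linear independence of $A_1,\dots,A_m$. Hence $W=\C^m$ and $\Psi$ is onto.

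There is no real obstacle here. The only point needing care is the Hahn--Banach step: it is what separates the nonzero vector $Ax$ by a functional, so that the rank-one operators $x\otimes f$ suffice to detect $A$. The rest is finite-dimensional duality: a proper subspace of $\C^m$ has a nonzero annihilator.
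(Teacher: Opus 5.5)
Your proposal is correct and follows essentially the same route as the paper: injectivity by testing on $x\otimes f$ (giving $f(Ax)$) together with Hahn--Banach, and surjectivity of $\Psi$ via a nonzero annihilating vector $(c_1,\dots,c_m)$ that would force $\sum_i c_iA_i=0$. The only difference is cosmetic. The paper first states that $\ell_{A_1},\dots,\ell_{A_m}$ are linearly independent and then contradicts that, while you fold both steps into one contradiction with the independence of the $A_i$.
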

\begin{proof}
The linearity of $\ell_A$ follows directly from the linearity of the trace functional. 
To prove injectivity, suppose that $\ell_A=0$. Then, for every $x\in X$ and 
$f\in X^*$, we have
\[
0=\ell_A(x\otimes f)
=\operatorname{tr}(Ax\otimes f)
=f(Ax).
\]
Since $f(Ax)=0$ for every $f\in X^*$, the Hahn--Banach theorem implies that 
$Ax=0$ for every $x\in X$. Consequently, $A=0$, which establishes the 
injectivity of the map $A\mapsto \ell_A$.

If $\sum_j c_j\ell_{A_j}=0$, injectivity gives $\sum_j c_jA_j=0$, and the  independence of the $A_j$ then gives $c_j=0$ for every $j$. Thus the $m$ functionals are independent. Suppose, to the
contrary, that $\operatorname{Im}\Psi$ is a proper subspace of
$\mathbb C^m$. Since $\mathbb C^m$ is finite-dimensional, there
exists a nonzero linear functional
$\varphi:\mathbb C^m\to\mathbb C$ that vanishes on
$\operatorname{Im}\Psi$. Hence there exist scalars
$c_1,\ldots,c_m$, not all zero, such that
\(
\varphi(z_1,\ldots,z_m)=\sum_{j=1}^m c_jz_j
\)
and 
$
0=\sum_{j=1}^m c_j\ell_{A_j}(T),
$ for every $T\in\F.$
Consequently,
\(
\sum_{j=1}^m c_j\ell_{A_j}=0,
\)
which contradicts the linear independence of
$\ell_{A_1},\ldots,\ell_{A_m}$. Therefore,
$\operatorname{Im}\Psi=\mathbb C^m$, and $\Psi$ is surjective.
\end{proof}

\begin{lemma}\label{lem:plane}
Let $\mathcal V\subseteq\B$ be a two-dimensional linear subspace containing an operator of rank at least two. Then, there is $T\in\F$ such that
\[
r(CT)>0\qquad(C\in\mathcal V\setminus\{0\}).
\]
\end{lemma}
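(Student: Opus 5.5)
The plan is to reduce to a $2\times 2$ matrix pencil. For a finite-rank $T=\sum_{j=1}^n x_j\otimes f_j$ and any $C\in\B$, the operator $CT=\sum_j Cx_j\otimes f_j$ factors as $UW$ with $W:X\to\C^n$, $Wy=(f_i(y))_i$, and $U:\C^n\to X$, $Ue_j=Cx_j$. Hence $\sigma(CT)\setminus\{0\}=\sigma(WU)\setminus\{0\}$, where $WU=M(C):=[f_i(Cx_j)]_{i,j}$. So
\[
r(CT)=0\iff M(C)\ \text{is nilpotent}.
\]
Since $C\mapsto M(C)$ is linear, it suffices to find $x_j,f_j$ such that the matrix pencil $\{M(C):C\in\mathcal V\}$ contains no nonzero nilpotent matrix. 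I will take $n=2$.

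Choose a basis $A,B$ of $\mathcal V$ with $\rank A\ge 2$. Pick $x_1,x_2$ with $Ax_1,Ax_2$ linearly independent. By Hahn--Banach, choose $f_1,f_2$ with $f_i(Ax_j)=\delta_{ij}$. Then $M(A)=I$, and for $C=sA+tB$ we get $M(C)=sI+tN$ with $N=M(B)=[f_i(Bx_j)]$. If $t=0$, then $M(C)=sI$ is not nilpotent for $s\neq0$. If $t\ne0$, then $sI+tN$ is nilpotent iff $N=\lambda I+N_0$ with $N_0$ nilpotent and $s=-\lambda t$. So it suffices to arrange that $N$ is \emph{not} of the form $\lambda I+N_0$, i.e.\ that $N$ has two distinct eigenvalues, equivalently $(\tr N)^2\neq 4\det N$.

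To achieve this, I will exploit the remaining freedom in the construction. First, each $f_i$ may be replaced by $f_i+g_i$ with $g_i$ vanishing on $\spn{Ax_1,Ax_2}$. This leaves $M(A)=I$ intact and changes $N_{ij}$ by $g_i(Bx_j)$. Suppose some $Bx_j\notin\spn{Ax_1,Ax_2}$, say $Bx_1$. Then Hahn--Banach allows $g_1(Bx_1)$ and $g_2(Bx_1)$ to be prescribed arbitrarily. This moves the first column of $N$ freely: for instance, we can set $N_{21}=1$ and $N_{11}$ arbitrary, and then choose $N_{11}$ so that the discriminant $(N_{11}-N_{22})^2+4N_{12}N_{21}$ is nonzero. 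That gives two distinct eigenvalues, and we are done.

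The main obstacle is the degenerate case where this freedom is unavailable. This happens when, for every pair $x_1,x_2$ with $Ax_1,Ax_2$ independent, we have $Bx_1,Bx_2\in\spn{Ax_1,Ax_2}$, and moreover every admissible $N$ is scalar plus nilpotent. For this case I intend a local-linear-dependence argument. The condition says $Bx\in\spn{Ax,Ay}$ whenever $Ax,Ay$ are independent. Varying $y$ over vectors with $Ay\notin\spn{Ax}$, and using $\rank A\ge2$, I expect to force $Bx\in\spn{Ax}$ for every $x$ with $Ax\ne0$ (possibly after a short case check when $\rank A=2$). A standard argument then gives $Bx=\mu Ax$ with one constant $\mu$ on an open dense set, and hence $B-\mu A$ vanishes off $\ker A$. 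If $B-\mu A\ne0$, the vector $x_2$ can be varied inside the set where $Ax_2\neq 0$ while adding a vector of $\ker A$ not in $\ker(B-\mu A)$, which breaks the containment and returns us to the good case. Finally, $B=\mu A$ contradicts the two-dimensionality of $\mathcal V$.

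I expect this degenerate-case bookkeeping to be the hardest step. The first thing I would try there is to replace $x_2$ by $x_2+k$ with $k\in\ker A$, which keeps $M(A)=I$ and alters only the second column of $N$.
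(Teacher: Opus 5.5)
Your reduction to the compression $M(C)=[f_i(Cx_j)]$ is correct. So is the observation that, once you normalize $M(A)=I$, the pencil $\{sI+tN\}$ avoids nonzero nilpotents exactly when $N=M(B)$ has two distinct eigenvalues. The gap is the degenerate case. It is not vacuous, and your expected conclusion ($Bx\in\spn{Ax}$, hence $B=\mu A$) is false when $\rank A=2$.

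Take independent $u_1,u_2\in X$, independent $\varphi_1,\varphi_2\in X^*$, and
\[
A=u_1\otimes\varphi_1+u_2\otimes\varphi_2,\qquad B=u_1\otimes\varphi_2 .
\]
Let $T=x_1\otimes f_1+x_2\otimes f_2$ be any admissible choice, with $Ax_1,Ax_2$ independent and $f_i(Ax_j)=\delta_{ij}$. Then $AT$ is the identity on $\operatorname{ran}A=\spn{u_1,u_2}$, so $A(Tu_1)=u_1$. Comparing $u_2$-coordinates gives $\varphi_2(Tu_1)=0$. Hence $BT=u_1\otimes(\varphi_2\circ T)$ has rank at most one and trace $\varphi_2(Tu_1)=0$, so $r(BT)=0$: $N$ is always nilpotent.

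In this example every escape route in your plan is closed:
\begin{itemize}
\item $Bx_j\in\operatorname{ran}A=\spn{Ax_1,Ax_2}$ always, so your good case never occurs.
\item $Bx=\varphi_2(x)u_1\notin\spn{Ax}$ in general.
\item $\ker A\subseteq\ker B$, so the perturbation $x_2\mapsto x_2+k$ with $k\in\ker A$ changes nothing.
\item Normalizing at another rank-two element $sA+tB$ ($s\neq0$) reproduces the same configuration.
\end{itemize}
So no $T$ of your form exists for this $\mathcal V$, although the lemma holds for it. More generally, you are stuck whenever $B=LA$ with $L$ a non-scalar operator on the two-dimensional space $\operatorname{ran}A$ having a single eigenvalue, since then $N$ is the matrix of $L$ in the basis $Ax_1,Ax_2$.

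The culprit is the normalization $M(A)=I$. With $M(A)$ scalar, you can only succeed if $M(B)$ has distinct eigenvalues, and the example shows this can be impossible. You must allow a non-scalar $M(A)$, which is what the paper does:
\begin{itemize}
\item $T_0=x\otimes f-y\otimes g$ gives $M(A)=\operatorname{diag}(1,-1)$, i.e.\ $\tr(AT_0)=0$ and $\tr((AT_0)^2)=2$.
\item Lemma~\ref{lem:tracepairing} gives $T_1\in\F$ with $\tr(AT_1)=0$ and $\tr(BT_1)=1$.
\item Choose $t$ off the roots of the two nonzero polynomials $\tr(B(T_0+tT_1))$ and $\tr((A(T_0+tT_1))^2)$.
\end{itemize}
Then for $C=\alpha A+\beta B\neq0$, one has $\tr(CT)=\beta\tr(BT)\neq0$ if $\beta\neq0$, and $\tr((CT)^2)=\alpha^2\tr((AT)^2)\neq0$ if $\beta=0$. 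No case analysis on the position of $B$ relative to $A$ is needed.

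Your good case and your rank $\ge 3$ degenerate case can be completed. In the good case, note that changing $f_i$ may also move the second column of $N$ when $Bx_2$ depends on $Bx_1$ modulo $\spn{Ax_1,Ax_2}$; the discriminant is still a monic quadratic in the free parameter, so the step survives. But the rank-two configuration above cannot be handled without giving up $M(A)=I$.
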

\begin{proof}
Choose a basis $A,B$ of $\mathcal V$ with $\rank A\ge2$. Note that neither $A$ nor $B$ is assumed to have finite rank. We divide the proof into several steps :

\step{Step 1: There is an operator \(T_0\in \F\) satisfying
\[
\tr(AT_0)=0
\quad\text{and}\quad
\tr\bigl((AT_0)^2\bigr)=2.
\]}

Since $\operatorname{rank}A\ge 2$, there exist $x,y\in X$ such that $Ax$ and $Ay$ are linearly independent. By the Hahn-Banach theorem, we may therefore choose $f,g\in X^*$ satisfying
\[
f(Ax)=g(Ay)=1,\qquad f(Ay)=g(Ax)=0.
\]
Set $T_0=x\otimes f-y\otimes g$. This is bounded and has finite rank, and a simple computation, we obtain that, 
\[
AT_0=Ax\otimes f-Ay\otimes g, \quad  \quad (AT_0)^2=Ax\otimes f+Ay\otimes g.
\]
Taking traces yields
\begin{equation}\label{eq:Tzero}
\tr(AT_0)=0,\qquad \tr((AT_0)^2)=2.
\end{equation}

\step{Step 2: The construction of \(T\).}

Since $A$ and $B$ are linearly independent, Lemma~\ref{lem:tracepairing}
implies that the linear map
\[
T\longmapsto \bigl(\ell_A(T),\ell_B(T)\bigr)
\]
from $\mathcal F$ onto $\mathbb C^2$ is surjective. Consequently,
there exists $T_1\in\mathcal F$ such that
\[
\ell_A(T_1)=0,\qquad \ell_B(T_1)=1.
\]
For $t\in\mathbb C$, define $T(t)=T_0+tT_1$ and set
$q(t):=\ell_B(T(t))$ and $p(t):=\operatorname{tr}((AT(t))^2)$.
By the choice of $T_0$ and $T_1$, we have, for every $t\in\mathbb C$,
$$
\ell_A(T(t))=0, \quad \text{ and} \quad q(t):=\ell_B(T(t))=\ell_B(T_0)+t,
$$
and using the linearity of the trace, we obtain
$$
p(t)
=\tr((AT_0)^2)
+t\tr(AT_0AT_1+AT_1AT_0)
+t^2\tr((AT_1)^2)
$$
Since $p(0)=2$, the polynomial $p$ is nonzero and has degree at
most two. As $q$ is also a nonzero polynomial, we can choose
$t_0\in\mathbb C$ such that $q(t_0)\ne0$ and $p(t_0)\ne0$.
Setting $T:=T(t_0)$, we obtain a finite-rank operator $T\in\F$
satisfying
\begin{equation}\label{eq:testthree}
\tr(AT)=0,\qquad \tr(BT)\ne0,
\qquad \tr((AT)^2)\ne0.
\end{equation}

Now, let $C=\alpha A+\beta B\ne0$ be an arbitrary  non zero element from $\mathcal{V}$. If $\beta\ne0$, then
\[
\tr(CT)=\alpha\tr(AT)+\beta\tr(BT)=\beta\tr(BT)\ne0.
\]
If $\beta=0$, then $\alpha\ne0$ and
\[
\tr((CT)^2)=\alpha^2\tr((AT)^2)\ne0.
\]
In either case, $CT$ is a finite-rank operator for which the trace of some positive integer power is nonzero. Hence $r(CT)>0$, as desired.
\end{proof}

\section{The algebraic reflexive hull of a subspace}

For two complex vectors spaces $U,V$ of  finite-dimensional  and for a linear subspace $\mathcal S\subseteq\mathcal L(U,V)$, we denote by $\AlgRef(\mathcal S)$ for the  algebraic reflexive hull of the subspace $\mathcal{S}$, that is,
\[
\AlgRef(\mathcal S)=\{C\in\mathcal L(U,V): Cx\in\mathcal Sx\text{ for every }x\in U\},
\qquad
\mathcal Sx=\{Ax:A\in\mathcal S\}.
\]
$\mathcal{S}$ is called to be algebraically reflexive when $\AlgRef(\mathcal S)=\mathcal S$. It is called reduced when
\[
\bigcap_{A\in\mathcal S}\ker A=\{0\},
\qquad
\sum_{A\in\mathcal S}\operatorname{ran}A=V.
\]
The notion of the algebraic reflexive hull was introduced by
Loginov and Shulman in \cite{loginov1975} and has since been used in the study of operator spaces by many authors; see, for instance,
\cite{DSP, BK, eleftheriakis2012, erdos1986, muller2009}.
We recall the following consequence over the complex field of the
classification of two-dimensional reduced subspaces of
$\mathcal \mathcal L (U,V)$, established in \cite[Theorem 6.1]{DSP};
see also \cite{BK}.
\begin{theorem}\cite{{DSP}}\label{known:reflexivity}
Let $U,V$ be complex finite-dimensional  vector spaces and let $\mathcal S\subseteq\mathcal L(U,V)$ be a reduced two-dimensional subspace. If $\AlgRef(\mathcal S)\ne\mathcal S$, then $\dim U=\dim V=2$ and, after choosing bases of $U$ and $V$, $\mathcal S$ is represented by
\[
\mathcal S=\spn{I_2,N},\qquad N=\begin{pmatrix}0&1\\0&0\end{pmatrix}.
\]
\end{theorem}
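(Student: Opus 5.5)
The plan is to reduce the statement to the Kronecker canonical form of the matrix pencil $A+tB$, where $\mathcal S=\spn{A,B}$. In that form, $\AlgRef(\mathcal S)$ can be computed block by block. Choosing bases of $U$ and $V$ corresponds to replacing $\mathcal S$ by $P\mathcal SQ$ with $P,Q$ invertible. This replacement preserves reducedness, two-dimensionality and the property $\AlgRef(\mathcal S)\ne\mathcal S$. So we may assume the pencil is block diagonal, with blocks of three types:
\begin{enumerate}
\item the rectangular blocks $L_k$ ($k\times(k+1)$) and $L_k^T$;
\item Jordan blocks $J_m(\lambda)+tI_m$ with finite eigenvalues;
\item Jordan blocks $I_m+tJ_m(0)$ at infinity.
\end{enumerate}
Reducedness says exactly that the degenerate blocks $L_0$ (a zero column) and $L_0^T$ (a zero row) do not occur. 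Replacing $A,B$ by a suitable basis of $\mathcal S$ (a Möbius change of the pencil parameter), we may further arrange that every Jordan block has a finite eigenvalue different from the others we care about.

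First I would treat the generic situation. Suppose $\dim\mathcal Sx=2$ on a Zariski-open dense set $\Omega\subseteq U$. Then for $C\in\AlgRef(\mathcal S)$ we can write $Cx=\alpha(x)Ax+\beta(x)Bx$ on $\Omega$, with $\alpha,\beta$ rational functions by Cramer's rule, homogeneous of degree $0$. Substituting $x+y$ and using linearity of $C$ gives
\[
(\alpha(x+y)-\alpha(x))Ax+(\beta(x+y)-\beta(x))Bx+(\alpha(x+y)-\alpha(y))Ay+(\beta(x+y)-\beta(y))By=0 .
\]
Whenever $Ax,Bx,Ay,By$ are linearly independent, all four coefficients vanish. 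If such pairs $(x,y)$ form a dense set, then $\alpha,\beta$ are constant and $C\in\mathcal S$. The whole difficulty is therefore the locus where $\dim(\mathcal Sx+\mathcal Sy)\le3$ for all $x,y$, i.e.\ where $\dim\sum_{A\in\mathcal S}\operatorname{ran}A\le 3$ or similar small configurations.

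Next I would do the block analysis to identify when this degeneration happens.
\begin{itemize}
\item For a single block of type $L_k$ or $L_k^T$ with $k\ge1$, direct computation of $\mathcal Sx$ on coordinate vectors shows that any $C$ in the reflexive hull has entries forced to follow the pencil pattern. Hence these blocks are reflexive.
\item For a Jordan block $J_m(\lambda)$ with $m\ge3$, testing on $e_m$ and on $e_m+se_{m-1}$ forces $C$ to be upper triangular Toeplitz of bandwidth two. This is the example $N^2e_3\notin\spn{e_3,e_2}$, and it gives reflexivity.
\item For $m=2$, a single block gives exactly $\spn{I_2,N}$. This space is not reflexive: $\AlgRef$ contains all upper triangular $2\times 2$ matrices, because $\mathcal Sx=\C^2$ unless $x\in\C e_1$.
\item When there are at least two blocks, I would show that the coefficients $\alpha,\beta$ obtained on each block must agree. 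Test vectors $x=x_1+x_2$ split across two blocks give $\mathcal Sx$ of dimension $2$, while $Cx$ decomposes blockwise. This yields $C\in\mathcal S$, including in the presence of a $2\times2$ Jordan block, since the coupling to another block kills the extra freedom.
\item A $1\times1$ block alone is one-dimensional, so it cannot occur by itself. Two $1\times1$ blocks with distinct eigenvalues give $\mathcal S$ equal to the diagonal matrices, which is reflexive.
\end{itemize}

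Finally I would handle the case $\dim\mathcal Sx\le1$ for all $x$. Then $Ax$ and $Bx$ are always dependent, and the standard lemma on locally linearly dependent pairs says $A,B$ have a common one-dimensional range or $B\in\C A$ after a linear combination. The first alternative contradicts $\sum\operatorname{ran}=V$ only if $\dim V>1$. A small separate check of the resulting forms shows that they are reflexive or not reduced.

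I expect the main obstacle to be the gluing step across several Kronecker blocks. One has to choose test vectors spread over different blocks so that $\mathcal Sx$ stays two-dimensional while the blockwise pieces of $Cx$ force a common pair $(\alpha,\beta)$. One must also verify that $\spn{I_2,N}$ is the only configuration in which this fails. I would first try the case of exactly two blocks, and then induct on the number of blocks.
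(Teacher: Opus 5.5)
The paper does not prove this statement; it imports it from \cite[Theorem 6.1]{DSP} (see also \cite{BK}). Your Kronecker-form argument is therefore an independent, self-contained route, and the overall strategy is viable. The structural facts you list about single blocks are true. The blockwise splitting of $C$ is also legitimate, but you should justify it: for $x\in U_i$ one has $Cx\in\mathcal Sx\subseteq V_i$, so every $C\in\AlgRef(\mathcal S)$ is block diagonal, with compressions lying in the hulls of the compressed pencils.

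The genuine gap is that the step carrying the whole content of the theorem is only announced, namely that every direct sum of at least two blocks is reflexive. Moreover, the mechanism you propose for it fails.

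First, a change of basis of $\mathcal S$ moves all eigenvalues of the pencil by one and the same M\"obius map. It can make them all finite, but it cannot separate coinciding eigenvalues. So you must handle configurations such as $J_2(\lambda)\oplus J_1(\lambda)\oplus\cdots\oplus J_1(\lambda)$, that is, $\mathcal S=\spn{I_m,E_{12}}$ (with $E_{ij}$ the matrix units). This example also refutes the ``i.e.'' in your generic paragraph: there $\dim(\mathcal Sx+\mathcal Sy)\le 3$ for all $x,y$, while $\dim V=m$ is arbitrary.

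Second, on a $J_2$ block the compression of $C$ lies in the three-dimensional hull of all upper triangular $2\times 2$ matrices. On a $J_1$ block the compressed pencil is one-dimensional. In neither case are there coefficients $\alpha,\beta$ ``obtained on the block'' that could be compared. Nor do test vectors with $\dim\mathcal Sx=2$ suffice. For $\mathcal S=\spn{I_3,E_{12}}$, the operator $C=E_{11}$ satisfies $Cx\in\spn{x,e_1}=\mathcal Sx$ for every $x$ with $x_2\neq 0$. It is excluded only by vectors such as $x=e_1+e_3$, where $\mathcal Sx=\C x$ is one-dimensional.

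Removing the extra parameter of a $J_2$ block therefore requires a real case analysis over the partner blocks:
\begin{itemize}
\item a block admitting some $x_j$ with $\dim\mathcal S_jx_j=2$;
\item a $J_1$ block with the same eigenvalue, as in the example above;
\item a $J_1$ block with a different eigenvalue, where $x=se_1+e_2+e_3$ does the job.
\end{itemize}
You also need the case of pencils consisting only of $J_1$ blocks, where triples $e_i+e_j+e_k$ work. This analysis is exactly what has to be proved, so it cannot be left as ``I would show''.

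Your single-block checks are also under-specified.
\begin{itemize}
\item For a Jordan block with $m\ge 3$, the vectors $e_m$ and $e_m+se_{m-1}$ constrain only two columns of $C$. You need $e_k$ and $e_k+se_{k-1}$ for every $3\le k\le m$. The resulting relation involves $e_{k-2}$, which is precisely why $m=2$ fails.
\item For $L_2$ (with $A=[I_2\mid 0]$, $B=[0\mid I_2]$), coordinate vectors only give $c_{21}=c_{13}=0$, because $\mathcal Sx=\C^2$ for generic $x$. The relations $c_{22}=c_{11}$ and $c_{23}=c_{12}$ come from the vectors $(1,t,t^2)$ on the cone $x_1x_3=x_2^2$.
\end{itemize}
With these repairs the plan does yield a proof independent of \cite{DSP}.
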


The following result plays a crucial role in the proofs of our intermediate lemmas. It provides a necessary and sufficient condition for three operators to be linearly dependent.

\begin{lemma}\label{lem:triple}
Let $A,B,C\in\B$. Suppose that
\begin{equation}\label{eq:local}
\spn{Ax,Bx}=\spn{Ax,Cx}=\spn{Bx,Cx}
\qquad(x\in X).
\end{equation}
Then $A,B,C$ are linearly dependent as operators.
\end{lemma}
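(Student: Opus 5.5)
The plan is to argue by contradiction: reduce to a finite-dimensional situation and then apply Theorem~\ref{known:reflexivity} twice, to two different pairs taken from $A,B,C$.

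\step{Step 1: Local consequences of \eqref{eq:local}.}
Condition \eqref{eq:local} gives $Cx\in\spn{Ax,Bx}$ for every $x\in X$. Symmetrically, $Bx\in\spn{Ax,Cx}$ and $Ax\in\spn{Bx,Cx}$. It also gives two kernel properties:
\begin{itemize}
\item if $Ax=Bx=0$, then $Cx=0$, and similarly for the other pairs;
\item the range of each operator lies in the sum of the ranges of the other two.
\end{itemize}
Suppose, to the contrary, that $A,B,C$ are linearly independent.

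\step{Step 2: Finite-dimensional compression.}
Because linear independence of operators is detected by finitely many values $f(Tx)$, I choose finitely many vectors $x_1,\dots,x_k\in X$ and functionals $f_1,\dots,f_l\in X^*$ with the following property: the compressions $\tilde T=Q T|_{U_0}$ are still linearly independent for $T=A,B,C$. Here $U_0=\spn{x_1,\dots,x_k}$ and $Q:X\to\C^l$ is given by $Qy=(f_j(y))_j$. This is the same kind of argument as in Lemma~\ref{lem:tracepairing}.

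The inclusions from Step 1, namely $\tilde Cx\in\spn{\tilde Ax,\tilde Bx}$ and its symmetric versions, survive for $x\in U_0$. Now reduce:
\begin{itemize}
\item pass to $U=U_0/(\ker\tilde A\cap\ker\tilde B)$, which is legitimate by the kernel property in Step 1;
\item set $V=\operatorname{ran}\tilde A+\operatorname{ran}\tilde B$, which contains $\operatorname{ran}\tilde C$.
\end{itemize}
Linear independence is preserved, since the operators only lose a common kernel and a complement of a common range space. Then $\mathcal S=\spn{\tilde A,\tilde B}\subseteq\mathcal L(U,V)$ is a reduced two-dimensional subspace, and $\tilde C\in\AlgRef(\mathcal S)\setminus\mathcal S$.

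I expect this step to be the main technical obstacle. One must check carefully that independence and the two-sided local identities persist simultaneously for both pairs $\{A,B\}$ and $\{A,C\}$ after compression. To handle this, I would take the common kernel to be $\ker\tilde A\cap\ker\tilde B\cap\ker\tilde C$, which is the same set for every pair by Step 1. I would take the target space to be the sum of all three ranges, which is likewise the same for every pair.

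\step{Step 3: First application of Theorem~\ref{known:reflexivity}.}
The theorem gives $\dim U=\dim V=2$ and, in suitable bases, $\mathcal S=\spn{I_2,N}$. A direct computation of the reflexive hull shows that $\AlgRef(\mathcal S)$ is the space of upper triangular matrices. Indeed, $Ce_1\in\spn{e_1}$ is forced, and nothing else is. Hence $\tilde C=cI+dN+eE_{22}$ with $e\neq0$.

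\step{Step 4: Second application and contradiction.}
By symmetry, $\mathcal S'=\spn{\tilde A,\tilde C}$ is also a reduced two-dimensional subspace, and $\tilde B\in\AlgRef(\mathcal S')\setminus\mathcal S'$. So Theorem~\ref{known:reflexivity} applies again, and $\mathcal S'$ is similar to $\spn{I_2,N}$. In particular, every element of $\mathcal S'$ has a single eigenvalue.

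However, $\tilde A=a_1I+a_2N$ and $\tilde C$ are both upper triangular. The element $\alpha\tilde A+\gamma\tilde C$ therefore has diagonal entries $\alpha a_1+\gamma c$ and $\alpha a_1+\gamma(c+e)$. These are distinct whenever $\gamma\neq0$, since $e\ne0$. This contradiction shows that $A,B,C$ are linearly dependent.
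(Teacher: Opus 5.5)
There is a genuine gap in Step 4. Steps 1--3 are sound: the compression preserves independence and the local identities, the common kernel and the sum of ranges are the same for every pair, and $\AlgRef(\spn{I_2,N})$ is the upper triangular matrices, so $\tilde C=cI+dN+eE_{22}$ with $e\neq0$.

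\textbf{What goes wrong.} Theorem~\ref{known:reflexivity} chooses bases of $U$ and of $V$ independently. Here $U$ (a quotient of a subspace of $X$) and $V\subseteq\C^l$ are different spaces. So you only get $\spn{\tilde A,\tilde C}=P\,\spn{I_2,N}\,Q^{-1}$ for invertible $P,Q$, which is an equivalence, not a similarity. Eigenvalues are not invariant under $M\mapsto PMQ^{-1}$. Hence ``every element of $\mathcal S'$ has a single eigenvalue'' does not follow, and distinct diagonal entries give no contradiction.

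\textbf{The gap is real.} Take $\tilde A=N$, $\tilde B=I_2$, $\tilde C=D:=\begin{pmatrix}1&0\\0&2\end{pmatrix}$, i.e.\ $a_1=0$.
\begin{enumerate}
\item $\tilde C\in\AlgRef(\spn{\tilde A,\tilde B})\setminus\spn{\tilde A,\tilde B}$.
\item Since $DN=N$, we have $\spn{\tilde A,\tilde C}=D\,\spn{I_2,N}$. This space is reduced and equivalent to $\spn{I_2,N}$.
\item Its reflexive hull is $D\,\AlgRef(\spn{I_2,N})$, which is again the upper triangular matrices. So it contains $\tilde B$, and $\tilde B\notin\spn{\tilde A,\tilde C}$.
\end{enumerate}
Thus everything you use in Steps 3--4 holds, yet $\alpha N+\gamma D$ has the two eigenvalues $\gamma$ and $2\gamma$. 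What fails in this example is the third relation: $\tilde Ae_2=e_1\notin\spn{\tilde Be_2,\tilde Ce_2}=\C e_2$. So when the $I_2$-coefficient $a_1$ of $\tilde A$ vanishes, the pairs $\{A,B\}$ and $\{A,C\}$ alone cannot produce a contradiction.

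\textbf{How to repair it.} Since $\tilde A,\tilde B$ form a basis of $\spn{I_2,N}$, and \eqref{eq:local} is symmetric, you may swap $A$ and $B$ if necessary so that $a_1\neq0$. Then you have two options.
\begin{enumerate}
\item Argue as the paper does. For $x=(s,1)$, $\det(\tilde Ax,\tilde Bx)$ is a nonzero constant, while $\det(\tilde Ax,\tilde Cx)=a_1e\,s+a_2(c+e)-a_1d$ vanishes for some $s$. This contradicts $\spn{Ax,Bx}=\spn{Ax,Cx}$, and makes your second application of Theorem~\ref{known:reflexivity} unnecessary.
\item Keep the second application, but use a genuine equivalence invariant. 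The determinant restricted to $\spn{I_2,N}$ is $\alpha^2$, the square of a linear form. This property survives $M\mapsto PMQ^{-1}$ and changes of basis of the pencil. By contrast, $\det(\alpha\tilde A+\gamma\tilde C)=(a_1\alpha+c\gamma)\bigl(a_1\alpha+(c+e)\gamma\bigr)$ has non-proportional factors exactly when $a_1e\neq0$.
\end{enumerate}
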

\begin{proof}
Assume for a contradiction that $A,B,C$ are linearly independent.



For each \(x\in X\), define the linear map
\[
E_x:\mathbb C^3\longrightarrow X,\qquad
E_x(\alpha,\beta,\gamma)
=\alpha Ax+\beta Bx+\gamma Cx.
\]
Since \(A,B,C\) are linearly independent, the only coefficient vector \((\alpha,\beta,\gamma)\in\mathbb C^3\) satisfying
\[
\alpha Ax+\beta Bx+\gamma Cx=0
\qquad\text{for every }x\in X
\]
is the zero vector. Equivalently,
\[
\bigcap_{x\in X}\ker E_x=\{0\}.
\]

We claim that this intersection can be reduced to the intersection of finitely many of these kernels. Indeed, begin with \(K_0=\mathbb C^3\). Suppose that vectors \(x_1,\ldots,x_j\in X\) have been chosen and that
\[
K_j:=\bigcap_{i=1}^{j}\ker E_{x_i}\neq\{0\}.
\]
Choose a nonzero vector \((\alpha,\beta,\gamma)\in K_j\). By the linear independence of \(A,B,C\), implies that there exists \(x_{j+1}\in X\) such that
\(
\alpha Ax_{j+1}+\beta Bx_{j+1}
+\gamma Cx_{j+1}\neq0.
\)
Consequently, the vector 
\((\alpha,\beta,\gamma)\notin\ker E_{x_{j+1}}\), and therefore
\[
K_{j+1}:=K_j\cap\ker E_{x_{j+1}}
\]
is a proper subspace of \(K_j\). Hence \(\dim K_{j+1}\le \dim K_j-1\). Starting from \(\dim K_0=3\), we obtain \(K_3=\{0\}\), therefore there exist \(k\le 3\) and \(x_1,\dots,x_k\in X\) such that \(\bigcap_{i=1}^{k}\ker E_{x_i}=\{0\}\).

Now, Set
\[
U=\operatorname{span}\{x_1,\ldots,x_k\}.
\]
Then $U$ is a finite-dimensional subspace of $X$ with
$\dim U\leq 3$. Moreover, the restrictions
$A|_U$, $B|_U$, and $C|_U$ on  $U$ remain linearly independent.

Set
\[
V:=A(U)+B(U),\qquad
K:=\ker(A|_U)\cap\ker(B|_U).
\]
By \eqref{eq:local}, we have $C(U)\subseteq V$ and
$K\subseteq\ker(C|_U)$. Since $K$ is also contained in the
kernels of $A|_U$ and $B|_U$, the three operators induce
well-defined linear operators on the quotient space $U/K$  
\[
\widetilde A,\widetilde B,\widetilde C:U/K\longrightarrow V.
\]
Furthermore, these operators remain linearly independent.

Now, let  $\mathcal S=\spn{\widetilde A,\widetilde B}$. 
Let
\[
\mathcal S:=\operatorname{span}\{\widetilde A,\widetilde B\}.
\]
Since $\widetilde A$ and $\widetilde B$ are linearly independent,
$\dim\mathcal S=2$. By the definition of $K$, the common kernel
of the operators in $\mathcal S$ is trivial, while their ranges
span $V$. Hence, $\mathcal S$ is a reduced operator space.
Moreover, the preceding arguments yield
\[
\widetilde C\in\AlgRef(\mathcal S)
\setminus\mathcal S.
\]
We may therefore apply Theorem~\ref{known:reflexivity}, since
both $U/K$ and $V$ are finite-dimensional. It follows that,
with respect to suitable bases of these spaces, the operator
space $\mathcal S$ can be represented as
\[
\mathcal S=\operatorname{span}\{I_2,N\},
\qquad
N=\begin{pmatrix}0&1\\0&0\end{pmatrix}.
\]
For simplicity, use $A,B,C$ for the matrices of the induced maps $\widetilde A,\widetilde B$ and $\widetilde C$ in these bases. Since $A,B$ form a basis of $\mathcal S$, there exist $a,b,c,d\in\C$ such that
\[
A=aI_2+bN,\qquad B=cI_2+dN,
\qquad \Delta:=ad-bc\ne0.
\]
The inclusion $Ce_1\in\mathcal Se_1=\C e_1$ gives
\[
C=\begin{pmatrix}p&q\\0&r\end{pmatrix}
\]
for scalars $p,q,r$.  Otherwise, $ad-bc\neq 0$. Interchanging $A$ and $B$ if necessary, we may assume that $a\neq 0$. For every $s\in\mathbb C$, put $x=(s,1)$. Then \[
Ax=(as+b,a),\quad
Bx=(cs+d,c),\quad
Cx=(ps+q,r).
\]
In particular,
\[
\det(Ax,Bx)=bc-ad\neq 0.
\]
By condition (\ref{eq:local}), we obtain
\[
\det(Ax,Cx)
=a(r-p)s+br-aq\neq 0
\qquad(s\in\mathbb C).
\]
It follows that $a(r-p)=0$. Hence $r=p$,
and consequently
\(
C\in\operatorname{span}\{I_2,N\}
=\operatorname{span}\{A,B\},
\)
this contradict the  independence of the matrices $A, B$ and $C$, and this complete the required  proof.
\end{proof}

\section{Projective Space and Incidence condition}\label{sec:incidence}
For each nonzero operator $A\in\B$, we denote by
\[
[A]=\{tA:t\in\mathbb C\}
\]
the one-dimensional subspace spanned by $A$. Thus, $[A]=[B]$
if and only if $B=tA$ for some $t\in\mathbb C\setminus\{0\}$.
Given a linear subspace $\mathcal V\subseteq\B$, we denote by
$\PP(\mathcal V)$ its projective space, that is, the set of all
one-dimensional subspaces of $\mathcal V$.

For each nonzero $A\in\B$, define
\[
Z(A)=\{T\in\B:r(AT)=0\}.
\]
The following lemma shows that $Z(A)$ uniquely determines the
projective class of $A$.

\begin{lemma}\label{lem:profiles}
For any nonzero operators $A,B\in\B$, we have
\[
Z(A)=Z(B)\quad\Longleftrightarrow\quad [A]=[B].
\]
\end{lemma}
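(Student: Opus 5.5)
The implication ``$\Leftarrow$'' is immediate. If $B=tA$ with $t\neq0$, then $r(BT)=|t|\,r(AT)$ for every $T$, so $r(BT)=0$ exactly when $r(AT)=0$, and hence $Z(A)=Z(B)$. The substance is the converse, and for it I plan to restrict attention to rank-one idempotents and invoke Lemma~\ref{known:determination}.

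Assume $Z(A)=Z(B)$. Take any $P\in\Ione$. The products $AP$ and $BP$ have rank at most one. By \eqref{eq:rankone}, a rank-at-most-one operator is quasi-nilpotent exactly when it is nilpotent, and both are equivalent to having trace zero. So for such $P$,
\[
AP\in\Nil\iff r(AP)=0\iff P\in Z(A)=Z(B)\iff r(BP)=0\iff BP\in\Nil .
\]
This is precisely the hypothesis of Lemma~\ref{known:determination}, in the quasi-nilpotent form noted in Remark~\ref{rem:nil-quasi-nil}. That lemma then gives that the nonzero operators $A$ and $B$ are linearly dependent, so $B=tA$ with $t\ne0$, i.e.\ $[A]=[B]$.

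The only point needing care is that Lemma~\ref{known:determination} is stated for nilpotence. This is handled by the observation that every tested product $AP$ with $P$ of rank one has rank at most one, so the two notions coincide on these products.

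If one prefers to avoid that lemma, there is a direct route, using $P=x\otimes f$ with $f(x)=1$ and $\tr(A(x\otimes f))=f(Ax)$. The condition becomes: $f(Ax)=0\iff f(Bx)=0$ whenever $f(x)=1$. By rescaling, this extends to all $f$ with $f(x)\ne0$, and by a Hahn--Banach density argument also to $f(x)=0$. It follows that $Bx\in\spn{Ax}$ for every $x$. A standard argument then shows that $B=tA$, namely that the local scalar is constant, and this is the only mildly technical step on that route. I would use the lemma, since it is already available.
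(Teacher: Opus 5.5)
Your proof is correct and follows the paper's argument essentially verbatim. The easy direction goes via $r(BT)=|t|\,r(AT)$. For the converse you restrict to $P\in\Ione$, observe that $AP$ and $BP$ have rank at most one so quasi-nilpotence and nilpotence coincide, and invoke Lemma~\ref{known:determination}; the optional direct route you sketch is not used in the paper.
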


\begin{proof}
If $B=tA$ for some $t\ne0$, then
$r(BT)=|t|r(AT)$ for every $T\in\B$, and hence
$Z(A)=Z(B)$.

Conversely, suppose that $Z(A)=Z(B)$. Then, for every
$P\in\Ione$,
\[
r(AP)=0\quad\Longleftrightarrow\quad r(BP)=0.
\]
Since $AP$ and $BP$ have rank at most one, this is equivalent
to saying that $AP$ is nilpotent if and only if $BP$ is
nilpotent. By Lemma~\ref{known:determination}, $A$ and $B$
are linearly dependent. As both operators are nonzero, it
follows that $[A]=[B]$.
\end{proof}

We next observe that the condition $r(AB)=0$ depends only on
the projective classes of $A$ and $B$. Indeed, for any
$s,t\in\mathbb C\setminus\{0\}$,
\[
r\bigl((sA)(tB)\bigr)=|st|\,r(AB).
\]
Thus, the condition $r(AB)=0$ defines a relation
on $\PP(\B)$, leading to the following definition.

\begin{definition}\label{def:incidence}
Let $\mathcal L\subseteq\PP(\B)$ be a family of projective points. For $T\in\B$, define
\[
\mathcal L_T=\{[C]\in\mathcal L:r(CT)=0\}.
\]
We say that $\mathcal L$ satisfies the incidence condition if it has at least three points and, for every $T\in\B$, the set $\mathcal L_T$ is either a singleton or the whole family $\mathcal L$.
\end{definition}

The following theorem provides a necessary and sufficient condition for a family of projective points to satisfy the incidence condition.

\begin{theorem}\label{thm:incidence}
Let $\mathcal L\subseteq\PP(\B)$ contain at least three points. The following statements are equivalent:
\begin{enumerate}[label=\textup{(\roman*)}]
\item For every $T\in\B$, $\mathcal L_T$ is a singleton or $\mathcal L_T=\mathcal L$.
\item There is a two-dimensional linear space $\mathcal V\subseteq\B$ such that every nonzero member of $\mathcal V$ has rank one and $\mathcal L=\PP(\mathcal V)$.
\end{enumerate}
\end{theorem}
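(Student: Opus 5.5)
The implication (ii)$\Rightarrow$(i) rests on the identity $r(R)=|\tr R|$ for operators of rank at most one, from \eqref{eq:rankone}. If every nonzero $C\in\mathcal V$ has rank one, then for each $T\in\B$ the product $CT$ has rank at most one. Hence $r(CT)=|\ell_C(T)|$, and $C\mapsto \tr(CT)$ is a linear functional on the two-dimensional space $\mathcal V$. Its kernel is either all of $\mathcal V$ or a line. In the first case $\mathcal L_T=\mathcal L$; in the second $\mathcal L_T$ is exactly one point of $\PP(\mathcal V)$.

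For (i)$\Rightarrow$(ii) I will use one observation throughout: condition (i) forbids $\mathcal L_T=\emptyset$, and forbids $\mathcal L_T$ from containing two points without being all of $\mathcal L$. The proof has three steps.

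\emph{Step 1: any three points of $\mathcal L$ are linearly dependent.} Take distinct $[A],[B],[C]\in\mathcal L$. I test with rank-one operators $T=x\otimes f$. By \eqref{eq:rankonetests}, $r(DT)=|f(Dx)|$, so $[D]\in\mathcal L_T$ exactly when $f(Dx)=0$. Fix $x$. If some $f\in X^*$ kills two of $Ax,Bx,Cx$ but not the third, then $\mathcal L_T$ contains two points but not all of $\mathcal L$, a contradiction. So every $f$ annihilating two of these vectors annihilates the third. By Hahn--Banach this means each of $Ax,Bx,Cx$ lies in the span of the other two. From this I will check, treating separately the degenerate cases where some vectors vanish or two of them are proportional, that
\[
\spn{Ax,Bx}=\spn{Ax,Cx}=\spn{Bx,Cx}.
\]
For instance, if $Ax=0$, then $Cx\in\C Bx$ and $Bx\in\C Cx$. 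Lemma~\ref{lem:triple} then gives that $A,B,C$ are linearly dependent. Consequently, fixing two distinct points $[A],[B]$ and setting $\mathcal V=\spn{A,B}$, we get $\dim\mathcal V=2$ and $\mathcal L\subseteq\PP(\mathcal V)$.

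\emph{Step 2: every nonzero element of $\mathcal V$ has rank one.} Suppose instead that $\mathcal V$ contains an operator of rank at least two. Lemma~\ref{lem:plane} gives $T\in\F$ with $r(CT)>0$ for all $C\in\mathcal V\setminus\{0\}$. Then $\mathcal L_T=\emptyset$, since $\mathcal L\subseteq\PP(\mathcal V)$, contradicting (i). Hence all nonzero members of $\mathcal V$ have rank exactly one.

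\emph{Step 3: $\mathcal L=\PP(\mathcal V)$.} Let $[D]\in\PP(\mathcal V)$. Choose a point $[A]\in\mathcal L$ with $[A]\ne[D]$; this is possible since $\mathcal L$ has at least three points. By Lemma~\ref{lem:tracepairing} applied to the independent pair $D,A$, there is $T\in\F$ with $\tr(DT)=0$ and $\tr(AT)=1$. By Step 2, $r(CT)=|\tr(CT)|$ for $C\in\mathcal V$. The functional $C\mapsto\tr(CT)$ is nonzero on $\mathcal V$, so its kernel is exactly the line $[D]$. Now $\mathcal L_T\ne\mathcal L$, because $[A]\notin\mathcal L_T$, and $\mathcal L_T\neq\emptyset$. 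So $\mathcal L_T$ is a single point $[E]$ with $E\in\mathcal V$ and $\tr(ET)=0$. This forces $[E]=[D]$, hence $[D]\in\mathcal L$.

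I expect Step 1 to be the main obstacle. The work there is the careful translation of the incidence condition into the pointwise span equalities \eqref{eq:local}, including all degenerate configurations of $Ax,Bx,Cx$, since Lemma~\ref{lem:triple} requires the exact equalities for every $x\in X$.
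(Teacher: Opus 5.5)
Your proposal is correct and follows the paper's proof essentially step for step: rank-one tests with Hahn--Banach and Lemma~\ref{lem:triple}, then Lemma~\ref{lem:plane}, then a trace-pairing $T$ whose functional on $\mathcal V$ vanishes exactly on $[D]$, which you use directly where the paper argues by contradiction. The degenerate-case analysis you anticipate in Step 1 is not needed. The three inclusions alone give the equalities: $Cx\in\spn{Ax,Bx}$ yields $\spn{Ax,Cx}\subseteq\spn{Ax,Bx}$, $Bx\in\spn{Ax,Cx}$ yields the reverse inclusion, and the other pairs work the same way.
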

\begin{proof}
First, we show that $ (i)\Longrightarrow (ii)$.

Choose distinct points $[A],[B],[C]\in\mathcal L$ and nonzero representatives $A,B,C$. Fix $x\in X$ and let $f\in X^*$ satisfy $f(Ax)=f(Bx)=0$. For $T=x\otimes f$, formula~\eqref{eq:rankonetests} gives
\[
r(AT)=r(BT)=0.
\]
The set $\mathcal L_T$ contains the two distinct points $[A]$ and $[B]$, so it is not a singleton, thus $\mathcal L_T=\mathcal L$, and hence $r(CT)=0$ and $f(Cx)=0$.

We have shown that, for every $f\in X^*$, the conditions
$f(Ax)=f(Bx)=0$ imply $f(Cx)=0$. By the Hahn-Banach
theorem, it follows that  $Cx\in\spn{Ax,Bx}$. 
The same argument applies to the pairs $[A],[C]$ and $[B],[C]$ gives 
\[
Cx\in\spn{Ax,Bx},\quad
Bx\in\spn{Ax,Cx},\quad
Ax\in\spn{Bx,Cx}.
\]
These inclusions imply equality of the three pairwise spans. As $x$ was arbitrary, Lemma~\ref{lem:triple} shows that $A,B,C$ are linearly dependent. Hence, there exist scalars
$a,b,c\in\mathbb{C}$, not all zero, such that
\[
aA+bB+cC=0.
\]
Since $A$ and $B$ are linearly independent, we necessarily
have $c\neq0$. Set $\mathcal V:=\operatorname{span}\{A,B\},$ then 
$C=-\frac{a}{c} A-\frac{b}{c} B$, it follows that $C\in\mathcal V$. Applying the same argument to  every $[S]\in\mathcal L$,
we conclude that $S\in\mathcal V$ for every such $S$. Therefore, $\mathcal L\subseteq\PP(\mathcal V)$. 

We now show that every nonzero element of $\mathcal V$ has
rank one. Suppose, for a contradiction, that there exists
$D\in\mathcal V$ with $\operatorname{rank}(D)\geq 2$.
By Lemma~\ref{lem:plane}, there exists a finite-rank operator $T\in \F$ such that
\[
r(CT)>0,
\qquad\text{for every }C\in\mathcal V\setminus\{0\}.
\]
Since $\mathcal L\subseteq\mathbb P(\mathcal V)$, then   $\mathcal L_T=\varnothing$, and this contradicts condition~(i). Therefore, every nonzero element of $\mathcal V$ has rank one.

To complete the proof of the first implication, it remains to show that
$\mathcal L=\mathbb P(\mathcal V)$. Suppose, to the contrary,
that $\mathcal L$ is a proper subset of $\mathbb P(\mathcal V)$.
Choose $[D]\in\mathbb P(\mathcal V)\setminus\mathcal L$,
with $D\in\mathcal V\setminus\{0\}$, and let $E\in\mathcal V$
be linearly independent of $D$. By
Lemma~\ref{lem:tracepairing}, there exists $T\in\F$
such that
\[
\operatorname{tr}(DT)=0,\qquad
\operatorname{tr}(ET)=1.
\]
Since $D$ and $E$ form a basis of $\mathcal V$, every
$C\in\mathcal V$ can be written as $C=\alpha D+\beta E$.
Moreover, every nonzero element of $\mathcal V$ has rank one.
Therefore, $CT$ has rank at most one, and consequently
\[
r(CT)
=|\operatorname{tr}(CT)|
=|\alpha\operatorname{tr}(DT)
+\beta\operatorname{tr}(ET)|
=|\beta|.
\]
It follows that $r(CT)=0$ if and only if
$C$ and $ D$ are linearly dependent. Thus, $[D]$ is the unique point of
$\mathbb P(\mathcal V)$ annihilated by $T$. Since
$[D]\notin\mathcal L$, we obtain
$\mathcal L_T=\varnothing$, contradicting condition~(i).
Therefore,
\[
\mathcal L=\mathbb P(\mathcal V).
\]

Now, assume that (ii) holds. For each $T\in\B$, consider the
linear functional
\[
\lambda_T:\mathcal V\to\mathbb C,\qquad
\lambda_T(C)=\operatorname{tr}(CT).
\]
By \eqref{eq:rankone}, we have
\[
\mathcal L_T
=\{[C]\in\mathcal L:\lambda_T(C)=0\}.
\]
If $\lambda_T=0$, then $\mathcal L_T=\mathcal L$. Otherwise,
$\ker\lambda_T$ is one-dimensional, since $\dim\mathcal V=2$.
Hence, $\mathcal L_T$ contains at most one projective point. In either case, condition (i) follows.
\end{proof}
\begin{corollary}\label{cor:rankone}
Let $A\in\B\setminus\{0\}$. Then $\rank A=1$ if and only if $[A]$ belongs to a family $\mathcal L$ satisfying  the incidence condition.
\end{corollary}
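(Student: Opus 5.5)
The corollary follows from Theorem~\ref{thm:incidence}; one direction needs an explicit two-dimensional rank-one space through $A$.

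\emph{Families satisfying the incidence condition contain only rank-one points.} Suppose $[A]\in\mathcal L$ and $\mathcal L$ satisfies the incidence condition. By definition, $\mathcal L$ has at least three points and every $\mathcal L_T$ is a singleton or all of $\mathcal L$. Hence implication (i)$\Rightarrow$(ii) of Theorem~\ref{thm:incidence} gives a two-dimensional subspace $\mathcal V\subseteq\B$ with $\mathcal L=\PP(\mathcal V)$ and every nonzero element of $\mathcal V$ of rank one. Since $A\neq0$ and $[A]\in\PP(\mathcal V)$, we get $A\in\mathcal V\setminus\{0\}$, so $\rank A=1$.

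\emph{Every rank-one point lies in such a family.} Suppose $\rank A=1$ and write $A=x\otimes f$ with $x\neq0$ and $f\neq0$. Because $X$ is infinite-dimensional, so is $X^*$, and we may choose $g\in X^*$ linearly independent of $f$. Put
\[
\mathcal V=\{x\otimes h:\ h\in\spn{f,g}\}.
\]
The map $h\mapsto x\otimes h$ is linear and injective, since $x\otimes h=0$ with $x\neq0$ forces $h=0$. Therefore $\dim\mathcal V=2$. Each nonzero element of $\mathcal V$ has the form $x\otimes h$ with $h\neq0$, so it has rank one.

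Let $\mathcal L=\PP(\mathcal V)$. It contains $[A]$, and it has infinitely many points, in particular at least three: the classes $[x\otimes(f+tg)]$ for $t\in\C$ are pairwise distinct by the injectivity just noted. Implication (ii)$\Rightarrow$(i) of Theorem~\ref{thm:incidence} now shows that $\mathcal L$ satisfies the incidence condition.

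The only points that need any care are that $\mathcal L$ has at least three points, which is part of the definition of the incidence condition, and that the infinite dimensionality of $X$ supplies the second functional $g$. Alternatively, one can fix $f$ and vary the vector in a two-dimensional subspace of $X$ containing $x$.
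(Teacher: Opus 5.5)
Your proof is correct and follows the same route as the paper. Both directions come from Theorem~\ref{thm:incidence}, and for the converse you use the space spanned by $x\otimes f$ and $x\otimes g$ with $g$ independent of $f$. Your explicit check that $\PP(\mathcal V)$ has at least three points, which the theorem requires as a hypothesis, is a detail the paper leaves implicit.
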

\begin{proof}
If $[A]$ belongs to such a family, Theorem~\ref{thm:incidence} places $A$ in a two-dimensional space all of whose nonzero elements have rank one. Hence $\rank A=1$.

Conversely, write $A=x\otimes f$ with $x\ne0$ and $f\ne0$. Since $X$ is infinite-dimensional, its dual contains a functional $g$ independent of $f$. Let
\[
\mathcal V=\spn{x\otimes f,x\otimes g}.
\]
Clearly, $\mathcal V$ is a two-dimensional linear subspace. Moreover, every
nonzero element of $\mathcal V$ can be written as $x\otimes h$ with
$h\ne0$, and hence has rank one. By Theorem~\ref{thm:incidence},
$\PP(\mathcal V)$ satisfies the incidence condition and contains $[A]$.
\end{proof}

\section{The proof of the main Theorem}\label{sec:projective}
In this section, we prove our main theorem \ref{thm:main}. To this end, we first
establish several auxiliary lemmas, building on the results obtained
in the preceding sections, which will be used in the proof of
the theorem. For the remainder of this section, we assume that
$\phi:\B\to\B$ is a surjective map satisfying the condition \eqref{eq:P}.

\begin{lemma}\label{lem:zero}
We have $\phi(0)=0$, and $\phi(A)=0$ implies $A=0$. Moreover,
\[
A\in\Q\quad\Longleftrightarrow\quad\phi(A)\in\Q.
\]
\end{lemma}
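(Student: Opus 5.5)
We argue directly from condition \eqref{eq:P} and surjectivity. The basic observation is that $0$ is the only operator $A$ with $r(AT)=0$ for every $T\in\B$: if $A\ne0$, pick $x$ with $Ax\ne0$ and $f$ with $f(Ax)=1$, and set $T=x\otimes f$; then \eqref{eq:rankonetests} gives $r(AT)=1$. Equivalently, $Z(A)=\B$ if and only if $A=0$.

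\emph{Step 1: $\phi(0)=0$.} Since $r(0\cdot B)=0$ for every $B$, \eqref{eq:P} gives $r(\phi(0)\phi(B))=0$ for every $B\in\B$. By surjectivity, $\phi(B)$ runs over all of $\B$, so $r(\phi(0)T)=0$ for all $T\in\B$. The observation above then forces $\phi(0)=0$.

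\emph{Step 2: $\phi(A)=0$ implies $A=0$.} If $\phi(A)=0$, then $r(\phi(A)\phi(B))=0$ for every $B$. By \eqref{eq:P}, $r(AB)=0$ for all $B\in\B$. Here $B$ already ranges over all of $\B$, so no surjectivity is needed, and the observation gives $A=0$.

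\emph{Step 3: $A\in\Q\iff\phi(A)\in\Q$.} The natural test is to multiply by the identity, but $\phi(I)$ is not known to be $I$. So we first show that $\phi(I)$ is a nonzero scalar multiple of $I$, using Lemma~\ref{lem:profiles}. Surjectivity gives some $J$ with $\phi(J)=I$. Then
\[
r(JB)=0\iff r(\phi(B))=0,
\qquad
r(IB)=0\iff r(\phi(I)\phi(B))=0 .
\]
Taking $B=J$ in the first equivalence gives $r(J^2)=r(I)\ne0$, so $J\ne0$; also $I\ne0$. It therefore suffices to prove $Z(J)=Z(I)$: Lemma~\ref{lem:profiles} then gives $J=tI$ with $t\ne0$, and hence
\[
r(A)=0\iff r(JA)=0\iff r(\phi(J)\phi(A))=0\iff r(\phi(A))=0 .
\]

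\emph{The main obstacle} is proving $Z(J)=Z(I)$, that is, $r(JB)=0\iff r(B)=0$. From the first display, this is the same as $r(\phi(B))=0\iff r(B)=0$, which is exactly what we want to prove, so the argument is circular. To break it, I would work only with rank-one test operators, as in the proof of Lemma~\ref{lem:profiles}: I only need $r(JP)=0\iff r(P)=0$ for $P\in\Ione$. Since $r(P)=1$ for every $P\in\Ione$, this amounts to $r(JP)\ne0$ for all $P\in\Ione$. By \eqref{eq:rankonetests}, with $P=x\otimes f$ and $f(x)=1$, we have $r(JP)=|f(Jx)|$. So the condition is that $f(Jx)\neq0$ whenever $f(x)=1$. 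In dimension at least $3$ this forces $Jx\in\C x$ for every $x$, and then $J$ is scalar by the standard argument that locally scalar operators are scalar.

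To obtain $f(Jx)\ne0$ whenever $f(x)=1$, I would try to show that $\phi$ maps $\Ione$ into the non-quasinilpotent operators. Suppose instead that $r(\phi(P))=0$ for some $P\in\Ione$. Then $r(JP)=0$ by the first display, and this should be played against the relations $r(PQ)=0$ versus $r(\phi(P)\phi(Q))=0$ for suitable $Q$. Closing this step is where I expect the real difficulty to lie, and the outcome may depend on how the authors arrange the argument.
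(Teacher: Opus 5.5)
Your Steps 1 and 2 are correct and agree with the paper. The paper runs them as a single chain: $\phi(A)=0$ iff $r(\phi(A)\phi(B))=0$ for all $B$ (this uses surjectivity), iff $r(AB)=0$ for all $B$, iff $A=0$.

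\textbf{The gap is Step 3.} You never establish $A\in\Q\iff\phi(A)\in\Q$. As you note yourself, the detour through $J=\phi^{-1}(I)$ is circular, because $Z(J)=Z(I)$ is equivalent to the claim. Your fallback, showing $r(\phi(P))\neq0$ for $P\in\Ione$, is only a special case of the statement to be proved, and you leave it open.

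\textbf{The missing idea} is to apply \eqref{eq:P} to the diagonal pair $(A,A)$ and use the spectral mapping identity $r(T^2)=r(T)^2$:
\[
r(A)=0\iff r(A^2)=0\iff r\bigl(\phi(A)^2\bigr)=0\iff r(\phi(A))=0 .
\]
This is exactly the paper's argument. It needs neither surjectivity, nor any information about $\phi(I)$, nor Lemma~\ref{lem:profiles}. You already made this substitution when you took $B=J$ to get $r(J^2)\neq0$. Doing the same with an arbitrary $A$ in place of $J$ closes the proof at once, and it also settles your auxiliary claim that $\phi$ maps $\Ione$ to non-quasinilpotent operators.
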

\begin{proof}
First observe that an operator $D$ satisfying $r(DT)=0$ for every $T\in\B$ must be zero. Indeed, by \eqref{eq:rankonetests}, testing $T=x\otimes f$ gives $f(Dx)=0$ for all $x\in X$ and $f\in X^*$. Hahn--Banach gives $Dx=0$ for every $x$.

Applying the preceding argument to $D=\phi(A)$ and using the
surjectivity of $\phi$, we obtain
\begin{align*}
\phi(A)=0
&\iff r\bigl(\phi(A)\phi(B)\bigr)=0
\quad\text{for every }B\in\B\\
&\iff r(AB)=0
\quad\text{for every }B\in\B\\
&\iff A=0.
\end{align*}
Now, apply \eqref{eq:P} with the pair $(A,A)$, it follows that
\[
r(A)=0\iff r(A^2)=0\iff r(\phi(A)^2)=0\iff r(\phi(A))=0.
\]
\end{proof}
\begin{lemma}\label{prop:Phi}
The map
\[
\Phi:\PP(\B)\to\PP(\B),\qquad \Phi([A])=[\phi(A)],
\]
is a well-defined bijection that preserves the relation
$r(AB)=0$ in both directions.
\end{lemma}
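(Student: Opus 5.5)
The plan is to derive everything from Lemma~\ref{lem:profiles} and Lemma~\ref{lem:zero}, transporting the sets $Z(\cdot)$ through $\phi$. By Lemma~\ref{lem:zero}, $\phi$ maps $\B\setminus\{0\}$ into $\B\setminus\{0\}$. Surjectivity also gives that $\B\setminus\{0\}$ is the image of $\B\setminus\{0\}$, since $\phi(A)\neq 0$ forces $A\ne 0$ and $\phi(0)=0$. Hence $[\phi(A)]$ makes sense for every nonzero $A$.

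\emph{Key identity.} For nonzero $A$ we have
\[
Z(\phi(A))=\phi(Z(A)).
\]
For the inclusion $\supseteq$: if $T\in Z(A)$, then $r(AT)=0$, so $r(\phi(A)\phi(T))=0$ by \eqref{eq:P}. For $\subseteq$: if $S\in Z(\phi(A))$, write $S=\phi(T)$ by surjectivity; then $r(\phi(A)\phi(T))=0$, and the backward direction of \eqref{eq:P} gives $r(AT)=0$, that is, $T\in Z(A)$.

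\emph{Well-definedness.} Let $[A]=[B]$ with $A,B\neq 0$. Then $Z(A)=Z(B)$ by Lemma~\ref{lem:profiles}. The key identity gives $Z(\phi(A))=Z(\phi(B))$, and Lemma~\ref{lem:profiles} again gives $[\phi(A)]=[\phi(B)]$.

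\emph{Injectivity.} Suppose $[\phi(A)]=[\phi(B)]$. Then $Z(\phi(A))=Z(\phi(B))$, so $\phi(Z(A))=\phi(Z(B))$. This step needs care, because $\phi$ itself is not yet known to be injective, so we cannot simply cancel $\phi$. Instead we argue directly with \eqref{eq:P}. For any $T$,
\[
T\in Z(A)\iff r(\phi(A)\phi(T))=0\iff \phi(T)\in Z(\phi(A))=Z(\phi(B))\iff r(\phi(B)\phi(T))=0\iff T\in Z(B).
\]
Thus $Z(A)=Z(B)$, and Lemma~\ref{lem:profiles} yields $[A]=[B]$.

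\emph{Surjectivity and the relation.} Any nonzero $D$ equals $\phi(A)$ for some $A$, and $A\ne0$ by Lemma~\ref{lem:zero}; so $\Phi([A])=[D]$. Finally, since $r(AB)=0$ depends only on the classes $[A]$ and $[B]$, condition \eqref{eq:P} translates directly into: $r(AB)=0$ if and only if the representatives of $\Phi([A])$ and $\Phi([B])$ have quasi-nilpotent product.

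The only genuine subtlety is the injectivity step: one must avoid using injectivity of $\phi$ and instead route the argument through \eqref{eq:P} pointwise, as above. Everything else is bookkeeping.
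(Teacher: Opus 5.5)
Your proof is correct and follows the paper's route. You transport the sets $Z(\cdot)$ through $\phi$ using \eqref{eq:P} and surjectivity, apply Lemma~\ref{lem:profiles} in both directions for well-definedness and injectivity (your injectivity argument matches the paper's), and obtain surjectivity of $\Phi$ from that of $\phi$. The differences are cosmetic: you package the well-definedness step as the identity $Z(\phi(A))=\phi(Z(A))$, and you spell out the relation-preservation claim, which the paper leaves implicit as an immediate consequence of \eqref{eq:P} and scale invariance.
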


\begin{proof}
By Lemma~\ref{lem:zero}, $\phi(A)\ne0$ whenever $A\ne0$. Thus, $\Phi([A])=[\phi(A)]$ is well-defined as a projective class.

To show that $\Phi$ is well-defined, suppose that $[A]=[B]$.
Then $Z(A)=Z(B)$ by Lemma~\ref{lem:profiles}. Given any
$U\in\B$, surjectivity of $\phi$ yields $T\in\B$ such that
$U=\phi(T)$. Hence, by \eqref{eq:P},
\[
r(\phi(A)U)=0
\iff r(AT)=0
\iff r(BT)=0
\iff r(\phi(B)U)=0.
\]
Thus, $Z(\phi(A))=Z(\phi(B))$, and Lemma~\ref{lem:profiles}
implies that $[\phi(A)]=[\phi(B)]$. Therefore, $\Phi$ is
well-defined.

To prove that $\Phi$ is bijective, we first show that it is injective. Suppose that $[\phi(A)]=[\phi(B)]$ for some
$A,B\in\B$.  Then
$Z(\phi(A))=Z(\phi(B))$ by Lemma~\ref{lem:profiles}.
Using \eqref{eq:P}, we obtain, for every $T\in\B$,
\[
r(AT)=0\iff r(\phi(A)\phi(T))=0
\iff r(\phi(B)\phi(T))=0\iff r(BT)=0.
\]
Hence $Z(A)=Z(B)$ and $[A]=[B]$. Therefore $\Phi$ is injective. 

Finally, the surjectivity of $\phi$  implies directly  that of $\Phi$. Consequently, $\Phi$ is a bijective maps.
\end{proof}

\begin{lemma}\label{prop:rankone}
For every nonzero $A\in\B$,
\[
\rank A=1\quad\Longleftrightarrow\quad\rank\phi(A)=1.
\]
Consequently, $\phi$ maps $\Rone$ onto $\Rone$ and maps the rank-one nilpotents onto the rank-one nilpotents.
\end{lemma}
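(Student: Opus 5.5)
The plan is to use Corollary~\ref{cor:rankone}, which characterizes rank-one operators purely through the relation $r(CT)=0$ on $\PP(\B)$, and to transport this characterization through the bijection $\Phi$ of Lemma~\ref{prop:Phi}.

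The first step is to check that $\Phi$ maps families satisfying the incidence condition onto families satisfying it. Let $\mathcal L\subseteq\PP(\B)$ satisfy the incidence condition and put $\Phi(\mathcal L)=\{\Phi([C]):[C]\in\mathcal L\}$. Since $\Phi$ is injective, $\Phi(\mathcal L)$ has at least three points. Given $U\in\B$, surjectivity of $\phi$ gives $T$ with $U=\phi(T)$. By \eqref{eq:P}, for $[C]\in\mathcal L$ we have $r(\phi(C)U)=0$ if and only if $r(CT)=0$, which gives
\[
\Phi(\mathcal L)_U=\Phi(\mathcal L_T).
\]
As $\mathcal L_T$ is a singleton or all of $\mathcal L$, and $\Phi$ is injective, $\Phi(\mathcal L)_U$ is a singleton or all of $\Phi(\mathcal L)$. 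The converse direction uses the same argument with $\Phi^{-1}$. For $V\in\B$ we have $r(CV)=0\iff r(\phi(C)\phi(V))=0$, so
\[
\Phi(\mathcal M_V)=\Phi(\mathcal M)_{\phi(V)}
\]
for any family $\mathcal M$. Hence $\mathcal M$ satisfies the incidence condition whenever $\Phi(\mathcal M)$ does.

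Now let $A\ne0$ with $\rank A=1$. Corollary~\ref{cor:rankone} gives a family $\mathcal L\ni[A]$ satisfying the incidence condition. Then $\Phi(\mathcal L)\ni[\phi(A)]$ satisfies it too, and the corollary yields $\rank\phi(A)=1$. Conversely, suppose $\rank\phi(A)=1$. Take a family $\mathcal M'\ni[\phi(A)]$ satisfying the incidence condition. Since $\Phi$ is bijective, $\mathcal M'=\Phi(\mathcal M)$ with $\mathcal M=\Phi^{-1}(\mathcal M')\ni[A]$, and by the previous paragraph $\mathcal M$ satisfies the condition. Hence $\rank A=1$.

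For the consequences, the equivalence shows $\phi(\Rone)\subseteq\Rone$. For surjectivity onto $\Rone$: any $R\in\Rone$ equals $\phi(A)$ for some $A$, and $A\ne0$ by Lemma~\ref{lem:zero}, so $A\in\Rone$. For nilpotents, a rank-one operator is nilpotent iff it is quasi-nilpotent, and Lemma~\ref{lem:zero} gives $A\in\Q\iff\phi(A)\in\Q$. Thus rank-one nilpotents are mapped onto rank-one nilpotents.

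The only delicate point is the bookkeeping in the identity $\Phi(\mathcal L)_U=\Phi(\mathcal L_T)$. It requires both surjectivity of $\phi$, to realize every test operator $U$ as some $\phi(T)$, and the two-sided nature of \eqref{eq:P}. Everything else is a direct application of Corollary~\ref{cor:rankone}.
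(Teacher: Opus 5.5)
Your proposal is correct and follows essentially the same route as the paper. You transport the incidence condition through $\Phi$ in both directions via the identity $\Phi(\mathcal L_T)=\Phi(\mathcal L)_{\phi(T)}$, which uses surjectivity of $\phi$ and the two-sided condition \eqref{eq:P}, then apply Corollary~\ref{cor:rankone} each way, and finally get the surjectivity and nilpotent statements from Lemma~\ref{lem:zero} exactly as the paper does.
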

\begin{proof}
For any family $\mathcal L\subseteq\PP(\B)$ and any $T\in\B$,
we have
\begin{equation}\label{eq:transport}
\Phi(\mathcal L_T)=\bigl(\Phi(\mathcal L)\bigr)_{\phi(T)}.
\end{equation}
Indeed, since $\Phi$ is bijective, every point of $\Phi(\mathcal L)$
has a unique representation $\Phi([C])$ with $[C]\in\mathcal L$.
Moreover, by \eqref{eq:P},
\[
\begin{aligned}
\Phi([C])\in\Phi(\mathcal L_T)
&\iff [C]\in\mathcal L_T\\
&\iff r(CT)=0\\
&\iff r\bigl(\phi(C)\phi(T)\bigr)=0\\
&\iff \Phi([C])\in\bigl(\Phi(\mathcal L)\bigr)_{\phi(T)}.
\end{aligned}
\]
Therefore, \eqref{eq:transport} follows.

Suppose that $\mathcal L$ satisfies the incidence condition.
Since $\Phi$ is injective, $\Phi(\mathcal L)$ also contains at least
three points. Given any $U\in\B$, choose $T\in\B$ such that
$\phi(T)=U$. By \eqref{eq:transport},
\[
\Phi(\mathcal L_T)=\bigl(\Phi(\mathcal L)\bigr)_U.
\]
Since $\Phi$ is bijective, it preserves singletons and whole
families. Hence, $\mathcal L_T$ is a singleton or equals
$\mathcal L$ if and only if $\bigl(\Phi(\mathcal L)\bigr)_U$
is a singleton or equals $\Phi(\mathcal L)$, respectively.
Thus, $\Phi(\mathcal L)$ satisfies the incidence condition.  
Conversely, if a family $\mathcal M$ satisfies the condition, put $\mathcal L=\Phi^{-1}(\mathcal M)$. For each $T$, equation~\eqref{eq:transport} gives $\Phi(\mathcal L_T)=\mathcal M_{\phi(T)}$, which is a singleton or $\mathcal M$.  Therefore the condition is also preserved by $\Phi^{-1}$.


Applying Corollary~\ref{cor:rankone} in both directions shows
that $\phi$ preserves rank-one operators in both directions.
Consequently, its restriction to the set of rank-one operators
is surjective. Moreover, since a rank-one operator is
quasi-nilpotent if and only if it is nilpotent, Lemma~\ref{lem:zero}
implies that $\phi$ also preserves rank-one nilpotents in both
directions. By surjectivity, the induced map on rank-one
nilpotents is onto.
\end{proof}

\begin{lemma}\label{prop:adjacency}
For all $R_1,R_2\in\Rone$,
\[
R_1\sim R_2\quad\Longleftrightarrow\quad
\phi(R_1)\sim\phi(R_2).
\]
\end{lemma}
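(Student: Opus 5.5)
The plan is to transfer the characterization of Lemma~\ref{known:adjacency} through $\phi$. Because every product tested there has rank at most one, Remark~\ref{rem:nil-quasi-nil} lets us read that lemma with quasi-nilpotence in place of nilpotence. In that form it says: for linearly independent $R_1,R_2\in\Rone$, $R_1\sim R_2$ if and only if there is $R_3\in\Rone$, linearly independent of both $R_1$ and $R_2$, such that
\[
r(R_1T)=r(R_2T)=0\ \Longrightarrow\ r(R_3T)=0\qquad(T\in\B).
\]
Every ingredient here is either rank-one membership, linear (in)dependence, or the relation $r(\cdot\,\cdot)=0$. Each of these is transported by $\phi$: Lemma~\ref{prop:rankone} handles rank one, Lemma~\ref{prop:Phi} handles projective classes, and \eqref{eq:P} handles the product relation.

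First I dispose of the linearly dependent case. If $R_1$ and $R_2$ are linearly dependent, then $R_1\sim R_2$ trivially. On the other side, $[R_1]=[R_2]$ gives $[\phi(R_1)]=[\phi(R_2)]$ because $\Phi$ is well defined, so $\phi(R_1)\sim\phi(R_2)$. Conversely, if $\phi(R_1)$ and $\phi(R_2)$ are dependent, injectivity of $\Phi$ gives $[R_1]=[R_2]$. So I may assume that $R_1,R_2$ are independent, which by Lemma~\ref{prop:Phi} is equivalent to $\phi(R_1),\phi(R_2)$ being independent. Both images have rank one by Lemma~\ref{prop:rankone}.

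For the forward direction, suppose $R_1\sim R_2$ and take $R_3$ as in Lemma~\ref{known:adjacency}. By Lemma~\ref{prop:rankone}, $\phi(R_3)$ has rank one. By injectivity of $\Phi$, $[\phi(R_3)]\neq[\phi(R_i)]$, so $\phi(R_3)$ is independent of each $\phi(R_i)$. Now let $U\in\B$ be arbitrary and write $U=\phi(T)$ using surjectivity. Suppose $r(\phi(R_1)U)=r(\phi(R_2)U)=0$. Then \eqref{eq:P} gives $r(R_1T)=r(R_2T)=0$, hence $r(R_3T)=0$, and \eqref{eq:P} again gives $r(\phi(R_3)U)=0$. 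Lemma~\ref{known:adjacency} applied to $\phi(R_1),\phi(R_2)$ then yields $\phi(R_1)\sim\phi(R_2)$.

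For the converse, suppose $\phi(R_1)\sim\phi(R_2)$ and take a witness $R_3'$ from Lemma~\ref{known:adjacency}. By Lemma~\ref{prop:rankone}, $\phi$ maps $\Rone$ onto $\Rone$, so $R_3'=\phi(R_3)$ for some $R_3\in\Rone$. Injectivity of $\Phi$ shows that $R_3$ is independent of $R_1$ and of $R_2$. The implication on products pulls back by the same use of \eqref{eq:P}, now with $U=\phi(T)$ for arbitrary $T$. Hence $R_1\sim R_2$.

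I expect no serious obstacle; the argument is purely formal. The only points that need care are the independence bookkeeping, which must go through $\Phi$ and not through $\phi$ itself since $\phi$ need not be homogeneous, and the surjectivity of $\phi$ onto $\Rone$, which is what lets us pull back the witness in the converse direction.
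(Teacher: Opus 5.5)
Your proposal is correct and follows essentially the same route as the paper. Both proofs dispose of the linearly dependent case through $\Phi$, then transport the witness $R_3$ of Lemma~\ref{known:adjacency} through $\phi$ in each direction, using surjectivity and \eqref{eq:P}. In the forward direction you use injectivity of $\Phi$ to check that $\phi(R_1),\phi(R_2)$ are independent and that $\phi(R_3)$ is independent of each of them; the paper leaves these hypotheses of Lemma~\ref{known:adjacency} implicit, so your version is slightly more complete.
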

\begin{proof}
If $R_1$ and $R_2$ are linearly dependent, the result follows directly from Lemma~\ref{prop:Phi}. The same proposition implies that if $\phi(R_1)$ and $\phi(R_2)$ are linearly dependent, then the corresponding rank-one operators are as well. We may therefore assume that $R_1$ and $R_2$ are linearly independent.

Suppose first that $R_1\sim R_2$. By
Lemma~\ref{known:adjacency}, there exists a rank-one operator
$R_3$, non proportional to both $R_1$ and $R_2$, such that
\[
r(R_1T)=r(R_2T)=0
\quad\Longrightarrow\quad r(R_3T)=0
\qquad(T\in\B).
\]
Let $U\in\B$ satisfy
$r(\phi(R_1)U)=r(\phi(R_2)U)=0$. By surjectivity of $\phi$,
write $U=\phi(T)$ for some $T\in\B$. Using \eqref{eq:P}, we obtain
$r(R_1T)=r(R_2T)=0$, and hence $r(R_3T)=0$. Another application
of \eqref{eq:P} gives $r(\phi(R_3)U)=0$. Since, $\phi(R_3)$ is also of rank-one, then Lemma~\ref{known:adjacency} implies that
$$\phi(R_1)\sim\phi(R_2).$$

Conversely, suppose that $\phi(R_1)\sim\phi(R_2)$. By
Lemma~\ref{known:adjacency}, there exists a rank-one operator $D$,
nonproportional to either $\phi(R_1)$ or $\phi(R_2)$, such that
\[
r(\phi(R_1)U)=r(\phi(R_2)U)=0
\quad\Longrightarrow\quad r(DU)=0
\qquad(U\in\B).
\]
By surjectivity of $\phi$, choose $R_3\in\B$ such that
$\phi(R_3)=D$. Proposition~\ref{prop:rankone} implies that
$R_3$ has rank one, and injectivity of $\Phi$ ensures that
$R_3$ is non proportional to $R_1$ and $R_2$. For any $T\in\B$,
if $r(R_1T)=r(R_2T)=0$, then \eqref{eq:P} yields
$r(\phi(R_1)\phi(T))=r(\phi(R_2)\phi(T))=0$. The witness property
of $D$ gives $r(D\phi(T))=0$, and hence $r(R_3T)=0$. Lemma~\ref{known:adjacency}  implies
$R_1\sim R_2$, and this  complete the proof. 
\end{proof}

\begin{lemma}\label{prop:xi}
For every $P \in \mathcal{I}_1$, let us define
\begin{equation}\label{eq:xi}
\xi(P)=\frac{\phi(P)}{\tr(\phi(P))}.
\end{equation}
Then, the maps   $\xi:\Ione\to\Ione$ establishes a bijection of $\Ione$  with itself.
\end{lemma}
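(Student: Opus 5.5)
We must show: $\xi$ is well defined (i.e. $\tr\phi(P)\neq0$ and $\xi(P)$ is a rank-one idempotent), and that $\xi$ is injective and surjective onto $\Ione$.

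\emph{Well-definedness.} Take $P\in\Ione$. By Lemma~\ref{prop:rankone}, $\phi(P)$ has rank one. Since $r(P)=1\neq0$, the idempotent $P$ is not quasi-nilpotent. Lemma~\ref{lem:zero} then shows that $\phi(P)$ is not quasi-nilpotent. By \eqref{eq:rankone}, $r(\phi(P))=|\tr\phi(P)|$, so $\tr\phi(P)\ne0$. For a rank-one $R$ with $\tr R\neq0$, \eqref{eq:rankone} gives
\[
\bigl(R/\tr R\bigr)^2=\frac{(\tr R)R}{(\tr R)^2}=R/\tr R .
\]
Hence $\xi(P)=\phi(P)/\tr\phi(P)$ is a rank-one idempotent.

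\emph{Injectivity.} Suppose $\xi(P)=\xi(Q)$. Then $[\phi(P)]=[\phi(Q)]$, i.e.\ $\Phi([P])=\Phi([Q])$. By the injectivity of $\Phi$ in Lemma~\ref{prop:Phi}, $[P]=[Q]$, so $Q=tP$ for some $t\neq0$. Both $P$ and $Q$ have trace $1$, which forces $t=1$ and $P=Q$.

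\emph{Surjectivity.} Take $E\in\Ione$. By the surjectivity of $\phi$ there is $A\in\B$ with $\phi(A)=E$. Since $E$ has rank one, Lemma~\ref{prop:rankone} shows that $A$ has rank one. Since $E$ is not quasi-nilpotent, Lemma~\ref{lem:zero} shows that $A$ is not quasi-nilpotent, so $\tr A\ne0$. Set $P=A/\tr A\in\Ione$. Then $[P]=[A]$, and by the well-definedness of $\Phi$ we get $[\phi(P)]=[\phi(A)]=[E]$. Thus $\phi(P)=sE$ for some $s\neq0$, and
\[
\xi(P)=\frac{sE}{s\tr E}=E .
\]

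The only delicate point is this last step. Surjectivity of $\phi$ only produces some $A$ with $\phi(A)=E$, not an idempotent. The argument goes through because $\xi$ depends only on the projective class $[\phi(P)]$, and the normalization by the trace removes the unknown scalar $s$.
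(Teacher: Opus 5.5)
Your proof is correct and follows the paper's argument essentially step by step. You use Lemmas~\ref{lem:zero} and~\ref{prop:rankone} to get $\tr\phi(P)\neq0$, the injectivity of $\Phi$ for injectivity of $\xi$, and for surjectivity you pull back a rank-one idempotent via $\phi$ and normalize by the trace. The only differences are cosmetic: you force $t=1$ via $\tr P=\tr Q=1$ instead of $t^2P=tP$, and you compute $\xi(P)=E$ directly instead of invoking uniqueness of idempotents in a projective class.
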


\begin{proof}
Let $P\in\Ione$. Since $\rank P=1$ and $r(P)=1$,
Lemmas~\ref{lem:zero} and Lemma~\ref{prop:rankone}
imply that $\phi(P)$ has rank one and is not quasinilpotent.
Thus, $\tr(\phi(P))\ne0$ by \eqref{eq:rankone}. Setting
$c=\tr(\phi(P))$, we obtain
\[
\xi(P)^2
=\frac{\phi(P)^2}{c^2}
=\frac{c\phi(P)}{c^2}
=\xi(P).
\]
Since $\xi(P)$ has rank one, it belongs to $\Ione$.

To prove injectivity, suppose that $\xi(P)=\xi(Q)$. Then
$[\phi(P)]=[\phi(Q)]$, and Lemma~\ref{prop:Phi} yields
$[P]=[Q]$. Hence, $Q=tP$ for some $t\ne0$. Since $P$ and $Q$
are idempotents, $t^2P=tP$, so $t=1$ and $P=Q$.

Finally, let $Q\in\Ione$. By surjectivity of $\phi$, choose
$A\in\B$ such that $\phi(A)=Q$. By
Lemma~\ref{prop:rankone} and Lemma~\ref{lem:zero},
$A$ has rank one and is not quasinilpotent. Thus,
$a=\tr(A)\ne0$, and $P=A/a$ belongs to $\Ione$. Since
$[P]=[A]$, then 
$[\phi(P)]=[\phi(A)]=[Q]$. Consequently, $\xi(P)$ and $Q$
are idempotents in the same projective class, which implies
$\xi(P)=Q$ by the injectivity argument above. Therefore,
$\xi$ is surjective, and this  complete the proof.
\end{proof}


\begin{lemma}\label{prop:orthogonality}
The bijection $\xi$ in \eqref{eq:xi} preserves orthogonality
in both directions.
\end{lemma}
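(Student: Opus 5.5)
The plan is to reduce orthogonality to the combinatorial characterization of Lemma~\ref{known:orthogonality}, which involves only the relation $\sim$ and the class of rank-one nilpotents. Both of these are preserved by $\phi$ in both directions, by Lemmas~\ref{prop:adjacency} and~\ref{prop:rankone}. Since $\xi(P)$ is a nonzero scalar multiple of $\phi(P)$, and $\sim$ is invariant under nonzero scalings, we have $\xi(P)\sim R$ if and only if $\phi(P)\sim R$ for every rank-one $R$. Thus we may work with $\phi$ directly on idempotents.

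First suppose $P,Q\in\Ione$ are distinct with $PQ=QP=0$. By Lemma~\ref{known:orthogonality} there are rank-one nilpotents $M,N$ with
\[
P\sim M,\quad P\sim N,\quad Q\sim M,\quad Q\sim N,\qquad M\not\sim N.
\]
We then verify the corresponding configuration for the images.
\begin{itemize}
\item By Lemma~\ref{prop:rankone}, $\phi(M)$ and $\phi(N)$ are rank-one nilpotents.
\item By Lemma~\ref{prop:adjacency}, $\phi(P)\sim\phi(M)$, $\phi(P)\sim\phi(N)$, $\phi(Q)\sim\phi(M)$ and $\phi(Q)\sim\phi(N)$.
\item Also by Lemma~\ref{prop:adjacency}, $\phi(M)\not\sim\phi(N)$.
\item Replacing $\phi(P),\phi(Q)$ by $\xi(P),\xi(Q)$ keeps all these relations.
\item $\xi(P)\ne\xi(Q)$, by the injectivity of $\xi$ from Lemma~\ref{prop:xi}.
\end{itemize}
Lemma~\ref{known:orthogonality}, applied to the distinct idempotents $\xi(P)$ and $\xi(Q)$, then gives $\xi(P)\xi(Q)=\xi(Q)\xi(P)=0$.

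For the converse, suppose $\xi(P)\xi(Q)=\xi(Q)\xi(P)=0$. Then $\xi(P)\ne\xi(Q)$, since a nonzero idempotent is not orthogonal to itself, and hence $P\ne Q$. Lemma~\ref{known:orthogonality} supplies rank-one nilpotents $M',N'$ witnessing the configuration for $\xi(P),\xi(Q)$, and hence for $\phi(P),\phi(Q)$. By Lemma~\ref{prop:rankone}, $\phi$ maps rank-one nilpotents onto rank-one nilpotents, and a preimage of a rank-one nilpotent is itself a rank-one nilpotent. So we can choose rank-one nilpotents $M,N$ with $\phi(M)=M'$ and $\phi(N)=N'$. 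Lemma~\ref{prop:adjacency} in the reverse direction transfers all four relations $\sim$ and the relation $\not\sim$ back to $P,Q,M,N$. Lemma~\ref{known:orthogonality} then yields $PQ=QP=0$.

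The only delicate point is bookkeeping. We must check that Lemma~\ref{prop:adjacency} applies to every pair involved, which holds because all operators in play have rank one. We must also make sure the preimages $M,N$ are genuinely nilpotent and of rank one, which is exactly the ``both directions'' content of Lemmas~\ref{lem:zero} and~\ref{prop:rankone}. Beyond this there is no real obstacle: the argument is a direct transport of the characterization in Lemma~\ref{known:orthogonality}.
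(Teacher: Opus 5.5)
Your proposal is correct and follows the paper's proof step for step. You transport the configuration of Lemma~\ref{known:orthogonality} through $\phi$ using Lemmas~\ref{prop:rankone} and~\ref{prop:adjacency}, absorb the nonzero scalar $\tr(\phi(P))$ using the scale-invariance of $\sim$, and in the converse pull the witnesses back via surjectivity. Your explicit checks that $\xi(P)\neq\xi(Q)$, via injectivity or via the fact that a nonzero idempotent is not orthogonal to itself, are a small piece of bookkeeping the paper leaves implicit.
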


\begin{proof}
Let $P,Q\in\Ione$ be distinct. Suppose first that
$PQ=QP=0$. By Lemma~\ref{known:orthogonality}, there exist
rank-one nilpotents $M,N$ such that
\[
P\sim M,\quad P\sim N,\quad Q\sim M,\quad Q\sim N,
\qquad M\not\sim N.
\]
By Lemmas~\ref{prop:rankone} and~\ref{prop:adjacency}, these
relations are preserved by $\phi$. Since multiplying an operator
by a nonzero scalar does not change its range or kernel, the same
relations hold with $\xi(P)$ and $\xi(Q)$ in place of $P$ and $Q$.
Lemma~\ref{known:orthogonality} therefore yields
\[
\xi(P)\xi(Q)=\xi(Q)\xi(P)=0.
\]

Conversely, suppose that $\xi(P)\xi(Q)=\xi(Q)\xi(P)=0$.
Applying Lemma~\ref{known:orthogonality} to $\xi(P)$ and $\xi(Q)$,
we obtain rank-one nilpotents $D,E$ satisfying the corresponding
four adjacency relations and $D\not\sim E$. By surjectivity of
$\phi$, choose $M,N\in\B$ such that $\phi(M)=D$ and $\phi(N)=E$.
By Lemmas~\ref{prop:rankone} and~\ref{prop:adjacency}, the operators
$M,N$ are rank-one nilpotents and satisfy the corresponding
relations with $P,Q$, with $M\not\sim N$. Hence,
Lemma~\ref{known:orthogonality} gives
\[
PQ=QP=0.
\]
 Thus, $\xi$ preserves orthogonality in both directions.
\end{proof}

We are now ready to prove the main theorem. It suffices to establish the direct implication. 

\begin{proof}[Proof of Theorem \ref{thm:main}]
By Theorem~\ref{known:semrl}, the bijection $\xi$ defined in
\eqref{eq:xi} takes one of the following forms:
\[
\xi(P)=SPS^{-1}
\quad\text{or}\quad
\xi(P)=SP'S^{-1},
\qquad P\in\Ione.
\]
Without loss of generality, assume that $\xi(P)=SPS^{-1}$.
Then
\[
r\bigl((SAS^{-1})\xi(P)\bigr)=r(AP).
\]
Fix $A\ne0$. By the definition of $\xi$,
\[
\phi(P)=\tr(\phi(P))\,\xi(P),
\qquad P\in\Ione,
\]
where $\tr(\phi(P))\ne0$. Hence, by \eqref{eq:P},
\[
r\bigl(\phi(A)\xi(P)\bigr)=0
\iff r(AP)=0
\iff r\bigl((SAS^{-1})\xi(P)\bigr)=0.
\]
Since $\xi$ maps $\Ione$ onto itself, we obtain
\[
r(\phi(A)Q)=0
\iff r(SAS^{-1}Q)=0,
\qquad Q\in\Ione.
\]
By Lemma~\ref{known:determination}, the nonzero operators
$\phi(A)$ and $SAS^{-1}$ are linearly dependent. Thus there exists
a function $\gamma:\B\setminus\{0\}\to\C^\times$ such that
\[
\phi(A)=\gamma(A)SAS^{-1},
\qquad A\ne0.
\]
The second case, where $\xi(P)=SP'S^{-1}$, follows similarly. 

To complete the proof, it remains to show that
\[
\{\gamma(tA)\sigma(t):t\in\C^\times\}=\C^\times,
\qquad A\in\B\setminus\{0\}.
\]
To this end, write
\[
\phi(A)=\gamma(A)\psi(A),\qquad A\ne0,
\]
where
\[
\psi(A)=SAS^{-1}
\quad\text{or}\quad
\psi(A)=SA^*S^{-1}.
\]
Fix $A\ne0$ and let $s\in\C^\times$. By the surjectivity of
$\phi$, there exists $B\ne0$ such that
\(
\phi(B)=s\psi(A).
\) 
Since $\phi(B)=\gamma(B)\psi(B)$, then  we obtain $\gamma(B)\psi(B)=s\psi(A).$ Therefore, $\psi(B)$ and $\psi(A)$ are linearly dependent, which implies that $A$ and $B$ are linearly dependent as well. Hence, there exists some $t \in \mathbb{C}^\times$ such that $B = tA$.
Consequently,
\[
s\psi(A)=\phi(tA)
=\gamma(tA)\psi(tA)
=\gamma(tA)\sigma(t)\psi(A).
\]
Since $\psi(A)\ne0$, we conclude that
\(
s=\gamma(tA)\sigma(t).
\)
As $s\in\C^\times$ was arbitrary, this proves that
\[
\{\gamma(tA)\sigma(t):t\in\C^\times\}=\C^\times,
\qquad A\in\B\setminus\{0\},
\]
and completes the proof of our main Theorem \ref{thm:main}.
\end{proof}

\begin{remark}~~
\begin{enumerate}
    \item In general, a surjective map $\phi:\B\to\B$ satisfying
    condition~\eqref{eq:P} need not be linear or continuous. For
    instance, the map defined by
    \[
    \phi(0)=0,\qquad \phi(A)=\|A\|A\quad(A\ne0),
    \]
    satisfies~\eqref{eq:P}, but is not additive.
    Thus, additivity and continuity  cannot be deduced from the condition and surjectivity.

    \item In infinite-dimensional setting, a surjective map
    $\phi:\B\to\B$ preserves the nilpotency of product of  operator, 
    if and only if,  it preserves the quasi-nilpotency of product of  operator. Moreover, these conditions hold if and only if
    $\phi$ has one of the forms described in
    Theorem~\ref{thm:main}.
\end{enumerate}
\end{remark}


\begin{thebibliography}{9}
\bibitem{AAM}
Z. E. A. Abdelali, A. Achchi and R. Marzouki,
\emph{Maps preserving the local spectral radius zero of generalized product of operators},
Linear Multilinear Algebra \textbf{67} (2019), 2021--2029.
\href{https://doi.org/10.1080/03081087.2018.1479371}{doi:10.1080/03081087.2018.1479371}.

\bibitem{BK}
J. Bra\v ci\v c and B. Kuzma,
\emph{Reflexivity defect of spaces of linear operators},
Linear Algebra Appl. \textbf{430} (2009), 344--359.
\href{https://doi.org/10.1016/j.laa.2008.07.024}{doi:10.1016/j.laa.2008.07.024}.

\bibitem{DSP}
C. de Seguins Pazzis,
\emph{Range-compatible homomorphisms over the field with two elements},
Electron. J. Linear Algebra \textbf{34} (2018), 71--114.
\href{https://doi.org/10.13001/1081-3810.2982}{doi:10.13001/1081-3810.2982}.

\bibitem{eleftheriakis2012}
G.~K. Elefther\'akis and I.~G. Todorov,
\textit{Ranges of bimodule projections and reflexivity},
J. Funct. Anal. \textbf{262} (2012), no.~11, 4891--4915.

\bibitem{erdos1986}
J.~A. Erd\H{o}s,
\textit{Reflexivity for subspace maps and linear spaces of operators},
Proc. London Math. Soc. (3) \textbf{52} (1986), 582--600.
\href{https://doi.org/10.1112/plms/s3-52.3.582}{doi:10.1112/plms/s3-52.3.582}.

\bibitem{LSS}
C.-K. Li, P. \v Semrl and N.-S. Sze,
\emph{Maps preserving the nilpotency of products of operators},
Linear Algebra Appl. \textbf{424} (2007), 222--239.
\href{https://doi.org/10.1016/j.laa.2006.11.013}{doi:10.1016/j.laa.2006.11.013}.

\bibitem{loginov1975}
A.~I. Loginov and V.~S. Shulman,
\emph{Hereditary and intermediate reflexivity of $W^*$-algebras},
Math. USSR-Izv. \textbf{9} (1975), no.~6, 1189--1201.
\href{https://doi.org/10.1070/IM1975v009n06ABEH001517}{doi:10.1070/IM1975v009n06ABEH001517}.

\bibitem{muller2009}
V.~M\"uller,
\textit{On the reflexivity deficit},
in: Banach Algebras 2009 (abstract), Stefan Banach International Mathematical Center,
B\k{e}dlewo, 2009, pp.~14--24.

\bibitem{Semrl}
P. \v Semrl,
\emph{Non-linear commutativity preserving maps},
Acta Sci. Math. (Szeged) \textbf{71} (2005), 781--819.
\end{thebibliography}
\end{document}